\documentclass{amsart}

\usepackage{amsfonts}
\usepackage{amsmath}
\usepackage{amssymb}
\usepackage{mathrsfs}
\usepackage{amsthm,dsfont}
\usepackage{graphicx}

\newtheorem{theorem}{Theorem}[section]

\newtheorem{lemma}[theorem]{Lemma}

\newtheorem{proposition}[theorem]{Proposition}

\theoremstyle{definition}
\newtheorem{definition}[theorem]{Definition}
\newtheorem{example}[theorem]{Example}

\theoremstyle{remark}
\newtheorem{remark}[theorem]{Remark}
\numberwithin{equation}{section}
\def\ac{{\textit {\textbf{Acknowledgement:}} }}



\begin{document}
\title{3-Dimensional Discrete curvature flows and discrete Einstein metric}
\author{Huabin Ge, Xu Xu, Shijin Zhang}
\address[Huabin Ge]{School of Mathematical Sciences and BICMR, Peking University, Beijing 100871, P.R. China}
\email{gehuabin@pku.edu.cn}

\address[Xu Xu]{School of Mathematics and Statistics, Wuhan University, Wuhan 430072, P.R. China}
\email{xuxu2@whu.edu.cn}

\address[Shijin Zhang]{School of Mathematics and Systems Science, Beihang University, Beijing 100871, P.R. China}
\email{zhangshj.1982@yahoo.com.cn}
\date{}

\begin{abstract}
We introduce the discrete Einstein metrics as critical points of discrete energy on triangulated 3-manifolds, and study them by discrete curvature flow of second (fourth) order. We also study the convergence of the discrete curvature flow. Discrete curvature flow of second order is an analogue of smooth Ricci flow.
\end{abstract}

\maketitle

\section*{introduction}
This paper is a continuation of paper [G3]. Given a 3-dimensional triangulated manifold with piecewise linear metric. In paper [CR], Cooper and Rivin endowed each vertex with the angle defect of solid angles, which is called combinatorial scalar curvature. In paper [Gn1], Glickenstein studied a type of discrete scalar curvature flow. Paper [G3] defined discrete quasi-Einstein metrics and gave some analytical conditions for the existence of discrete quasi-Einstein metrics by introducing two other types of discrete scalar curvature flows.

There are other methods to define discrete curvature, such as discrete edge curvature, which is defined at each edge, see [Gn2]. This curvature is somewhat like Ricci curvature. In this paper, we will define discrete Einstein metrics as the critical points of discrete total edge curvature functional. Moreover, we will introduce two types of discrete edge curvature flows, one is of second order, the other is of fourth order. Discrete edge curvature flow of second order may be considered as an analogue of smooth Ricci flow. However, discrete edge curvature flow of fourth order seems more powerful than the flow of second order. Our results rely deeply on the properties of discrete Laplacian.

\section{Discrete Ricci Curvature and Discrete Einstein metric}

\subsection{Space of PL-metrics}

Consider a compact manifold $M$ of dimension 3 with a triangulation $\mathcal{T}$ on $M$. The triangulation is written as $\mathcal{T}=\{V,E,F,T\}$, where $V,E,F,T$ represent the set of vertices, edges, faces and tetrahedrons respectively. Denote $v_{1},v_{2},\cdots,v_{N}$ as the vertices of $\mathcal{T}$, where $N$ is the number of the vertices. We often write $i$ instead of $v_{i}$. $l_{ij}$ is the length of the edge $\{i,j\}$ in $\mathcal{T}$ which is connecting the vertex $i$ and the vertex $j$.

PL-metric is a map $l:E\rightarrow (0,+\infty)$ such that for any $\{i,j,k,l\}\in T$, $l_{ij},l_{ik},l_{il},$ $l_{jk},l_{jl},l_{kl}$ can be realized as an Euclidean tetrahedron.

We may think of PL-metrics as points in $\mathds{R}^{m}_{>0}$, $m$ times Cartesian product of $(0,+\infty)$, where $m$ is the number of edges in $F$. However, not all points in $\mathds{R}^{m}_{>0}$ represent PL-metrics. So we still need some nondegenerate conditions. Consider a Euclidean tetrahedron $\{i,j,k,l\}\in T$ with edge lengths $l_{ij},l_{ik},l_{il},l_{jk},l_{jl},l_{kl}$, then the volume of the Euclidean tetrahedron $\{i,j,k,l\}$ has the following formula due to Tartaglia (1494)
\begin{equation}
\mathrm{V}_{ijkl}^{2}=\frac{1}{288}\det A_{ijkl},
\end{equation}
where $A_{ijkl}=
\begin{pmatrix}
0 & 1 & 1 & 1 & 1 \\
1 & 0 & l_{ij}^{2} & l_{ik}^{2} & l_{il}^{2}\\
1 & l_{ij}^{2} & 0 & l_{jk}^{2} & l_{jl}^{2}\\
1 & l_{ik}^{2} & l_{jk}^{2} & 0 & l_{kl}^{2}\\
1 & l_{il}^{2} & l_{jl}^{2} & l_{kl}^{2} & 0
\end{pmatrix}.
$

\begin{definition}
We say the Euclidean tetrahedron $\{i,j,k,l\}\in T$ is nondegenerate if it satisfies the following three conditions:
\begin{enumerate}
\item $l_{pq}>0$, for any $p,q \in \{i,j,k,l\}$;
\item $\mathrm{V}_{ijkl}>0$;
\item the triangle inequality holds for any triangle in the tetrahedron $\{i,j,k,l\}$, i.e., for any $p,q,r\in \{i,j,k,l\}$, $l_{pq}, l_{pr}, l_{qr}$ satisfies the triangle inequality.
    \end{enumerate}
The triangulation $\mathcal{T}$ is nondegenerate if for any Euclidean tetrahedron in $T$ is nondegenerate.
\end{definition}

Now we give the definition of the Euclidean PL-manifold.
\begin{definition}
The manifold $(M^{3}, \mathcal{T}, l)$ is called an Euclidean PL-manifold if there exists a nondegenerate Euclidean triangulation with map $l$.
\end{definition}
We denote
\begin{center}
$\mathfrak{M}_{l}\triangleq \{l:E\rightarrow (0, +\infty)$ such that $(M^{3}, \mathcal{T}, l)$ is an Euclidean PL-manifold\}.
\end{center}

$\mathfrak{M}_{\tau}\triangleq \{(l_{ij},l_{ik},l_{il},l_{jk},l_{jl},l_{kl})\in \mathds{R}^{6}_{>0}|$ the lengths of the edges in the tetrahedron $\tau=\{i,j,k,l\}\in T$ are $l_{pq}$\}.

Then $\mathfrak{M}_{\tau}\subseteqq \mathds{R}^{6}_{>0}$ is a simply connected open set. Set $\widetilde{\mathfrak{M}_{\tau}}\triangleq \mathfrak{M}_{\tau}\times \mathds{R}^{m-6}$, then $\mathfrak{M}_{l}=\cap_{\tau\in T}\widetilde{\mathfrak{M}_{\tau}}$, is also a open set.

We give an example about a triangulation $\mathcal{T}$ of the standard three dimensional sphere $\mathbb{S}^{3}$ embedding in $\mathds{R}^{4}$.
\begin{example}\label{ex}
We take $A_{1}=(1,0,0,0), A_{2}=(-1,0,0,0), B_{1}=(0,1,0,0), B_{2}=(0,-1,0,0), C_{1}=(0,0,1,0), C_{2}=(0,0,-1,0), D_{1}=(0,0,0,1), D_{2}=(0,0,0,-1)$ as the vertices of $\mathcal{T}$, $P_{i}Q_{j}(P\neq Q \in \{A, B,C,D\}, i,j=1,2)$ are the edges of $\mathcal {T}$, $P_{i}Q_{j}R_{k}(i,j,k=1,2, $ any two of $(P,Q,R)\in \{A,B,C,D\}$ are different) are the faces of $\mathcal{T}$, regular tetrahedrons $A_{i}B_{j}C_{k}D_{l}(i,j,k,l=1, 2)$ are the tetrahedrons of $\mathcal{T}$. We know all edges have same length, equal $\frac{\pi}{2}$. Hence $(\mathbb{S}^{3}, \mathcal{T}, l)(l=\frac{\pi}{2}\{1,\cdots,1\})$ is an Eucliden PL-manifold.
\end{example}

\subsection{Combinatorial Ricci curvature and total curvature functional}

Given a Euclidean tetrahedron $\{i,j,k,l\}\in T$, the dihedral angle at edge $\{i,j\}$ is denoted by $\beta_{ij,kl}$.
The combinatorial Ricci curvature of the cone metric at an edge is $2\pi$ minus the sum of dihedral angles at the edge, see [L]. Define $R_{ij}$ as the combinatorial Ricci curvature at the edge $\{i,j\}$, i.e.,
\begin{equation}
R_{ij}=2\pi-\sum_{\{i,j,k,l\}\in T}\beta_{ij,kl},
\end{equation}
where the sum is taken over all tetrahedrons having $\{i,j\}$ as one of its edges.

 For simplicity we will write $l_{ij}, R_{ij}$ as $l_{1}, \cdots, l_{m}, R_{1},\cdots, R_{m}$ respectively, they are supposed to be ordered one by one, $m$ is the number of the edges in $F$. Define $l=(l_{1},\cdots,l_{m})^{T}, R=(R_{1},\cdots,R_{m})^{T}$, the transpose of $(l_{1},\cdots,l_{m}), (R_{1},\cdots,R_{m})$ respectively.  We define the matrix $L$ as following
\begin{equation}
L=\frac{\partial (R_{1},\cdots,R_{m})}{\partial(l_{1},\cdots,l_{m})}
=
\begin{pmatrix}
\frac{\partial R_{1}}{\partial l_{1}} & \cdots  & \frac{\partial R_{1}}{\partial l_{m}}\\
\vdots   & \vdots  &   \vdots \\
\frac{\partial R_{m}}{\partial l_{1}} & \cdots & \frac{\partial R_{m}}{\partial l_{m}}
\end{pmatrix}
\end{equation}

The total curvature functional is defined by
 \begin{equation}
 S=\sum_{i=1}^{m}R_{i}l_{i}.
 \end{equation}
 And the discrete quadratic energy functional is defined by
 \begin{equation}
 \mathcal{C}(l)=\|R\|^{2}=\sum_{i=1}^{m}R_{i}^{2}.
 \end{equation}

$$dS=\sum_{i=1}^{m}R_{i}dl_{i}+\sum_{i=1}^{m}l_{i}dR_{i}.$$
By the Schl\"{a}fli formula
 $$\sum_{i=1}^{m}l_{i}dR_{i}=0,$$
we have
$$dS=\sum_{i=1}^{m}R_{i}dl_{i}.$$
So
 \begin{equation*}
 \nabla_{l}S=R,
 \end{equation*}
 \begin{equation*}
 \mathrm{Hess}_{l}S=L.
 \end{equation*}
From above we know the matrix $L$ is symmetric. It is easy to get
$$\frac{\partial \mathcal{C}}{\partial l_{j}}=2\sum_{i=1}^{m}\frac{\partial R_{i}}{\partial l_{j}}R_{i}=2(\frac{\partial R}{\partial l})^{T}R.$$
So
\begin{equation}\label{gradientC}
\nabla_{l}\mathcal{C}=2L^{T}R.
\end{equation}
Since $R(tl_{1},tl_{2},\cdots,tl_{m})=R(l_{1},l_{2},\cdots, l_{m})$, we obtain the Euler formula
\begin{equation}\label{EulerFormula}
Ll=0.
\end{equation}

The curvature $R_{ij}$ is a combinatorial analogue of Ricci curvature in smooth cases. Fix $i$,the sum of all $R_{ij}$ with $j$ connecting $i$ is the curvature $R_{i}$ defined by Cooper and Rivin, see [CR].

Next we will define the discrete Einstein metric.
\begin{definition}
A PL-metric $l$ is called a discrete Einstein metric, if there exists a number $\lambda$ such that $R=\lambda l$, it is denoted by $l_{DE}$ and $R_{DE}$.
\end{definition}
\begin{remark}
It is easy to compute the combinatorial Ricci curvature at any edge in $(\mathbb{S}^{3}, \mathcal{T}, l)$ as in the Example \ref{ex}, $R_{ij}=2\pi-4\arccos{1/3}$. So $R=\frac{4\pi-8\arccos(1/3)}{\pi}l$, $l$ is discrete Einstein metric.
\end{remark}
\subsection{Critical points of functionals}
We have defined the total curvature functional and discrete quadratical energy functional. Now we consider the average of the functionals $S_{r}=\frac{S}{\|l\|^{r}}, \mathcal{C}_{r}=\frac{\mathcal{C}}{\|l\|^{r}}$ for some $r$.

\begin{equation*}
\nabla_{l}S_{r}=\frac{1}{\|l\|^{r}}(R-\frac{rS}{\|l\|^{2}}l)=\frac{1}{\|l\|^{r}}(\nabla_{l}S-\frac{rS}{\|l\|^{2}}l).
\end{equation*}
Then $\nabla_{l}S_{r}=0$ if and only if $\nabla_{l}S-\frac{rS}{\|l\|^{2}}l=0$, i.e., $R=\frac{rS}{\|l\|^{2}}l$. Multiply both sides above equality by $l$, we have
$$S=rS.$$
Hence $r=1$ or $S=0$.

If $r=1$, then the critical point of $S_{1}$ is the discrete Einstein metric $l_{DE}$ such that $R_{DE}=\lambda_{DE}l_{DE}$. If $r\neq 1$, we have $S=0, R=0$, so the critical point is the discrete Ricci flat metric.


\section{Combinatorial second order flow}

\subsection{Definition and evolution equations}
We define the combinatorial two-order flow
\begin{equation}\label{CRF}
\dot{l}(t)_{ij}=-R_{ij}, \quad\mathrm{or}\quad \dot{l}(t)=-R.
\end{equation}
We also consider the normalized combinatorial two-order flow
\begin{equation}\label{NCRF}
\dot{l}(t)_{ij}=-R_{ij}+\lambda l_{ij}, \quad\mathrm{or}\quad  \dot{l}(t)=-R+\lambda l
\end{equation}
where $\lambda=\frac{S}{\|l\|^{2}}$, $\|l\|^{2}=\sum_{i=1}^{n} l_{i}^{2}$. Since $\frac{d\|l\|^{2}}{dt}=0$, $\|l\|^{2}=constant$  is preserved by the normalized combinatorial two-order flow (\ref{NCRF}).

And it is easy to obtain the following evolution equations.
\begin{equation}
\dot{R}=\frac{\partial R}{\partial l}\dot{l}=L(-R+\lambda l)=-LR,
\end{equation}
where we have used the Euler formula $Ll=0$. So
\begin{equation}
\dot{\mathcal{C}}=-2R^{T}LR.
\end{equation}
\begin{equation*}
\begin{aligned}
\dot{S}&=\sum_{i=1}^{m}\dot{R_{i}}l_{i}+R_{i}\dot{l_{i}}=-\|R\|^{2}+\lambda S\\
&=\frac{S^{2}-\|l\|^{2}\|R\|^{2}}{\|l\|^{2}}=\frac{<R,l>^{2}-\|l\|^{2}\|R\|^{2}}{\|l\|^{2}}\\
&=-\|R-\lambda l\|^{2}=-\|R-\frac{S}{\|l\|^{2}}l\|^{2}\leq 0,
\end{aligned}
\end{equation*}
the above equality have used the Euler formula $Ll=0$. Hence
\begin{equation}
\dot{\lambda}=\frac{\dot{S}}{\|l\|^{2}}=-(\frac{\|R\|}{\|l\|})^{2}+\lambda^{2}=-\frac{\|R-\lambda l\|^{2}}{\|l\|^{2}}\leq 0.
\end{equation}

\subsection{Convergence of the combinatorial second order flow}
Finding good metrics is always a central topic in Riemannian geometry.
\begin{theorem}
If the solution of the flow (\ref{CRF}) exists for all time and converge to a non-degenerate PL-metric $l_{\infty}$, then the discrete Ricci flat metric exists. Moreover, $l_{\infty}$ is indeed one.
\end{theorem}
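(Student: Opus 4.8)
The plan is to exploit the fact that the unnormalized flow (\ref{CRF}) is precisely the negative gradient flow of the total curvature functional $S$. Since $\nabla_{l}S=R$, along (\ref{CRF}) one has $\dot{S}=\langle\nabla_{l}S,\dot{l}\rangle=\langle R,-R\rangle=-\|R\|^{2}$; equivalently, starting from $\dot{S}=\sum_{i}(\dot{R_{i}}l_{i}+R_{i}\dot{l_{i}})$ and discarding the first sum by the Schl\"{a}fli formula $\sum_{i}l_{i}\dot{R_{i}}=0$, we obtain $\dot{S}=-\|R\|^{2}\leq 0$. Thus $S(l(t))$ is non-increasing in $t$, which is the monotonicity that drives the whole argument.

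First I would use the convergence hypothesis together with the non-degeneracy of $l_{\infty}$. Because $l_{\infty}$ is a non-degenerate PL-metric, it lies in the open set $\mathfrak{M}_{l}$, on which each dihedral angle $\beta_{ij,kl}$, and hence each curvature $R_{ij}$ and the functional $S$, is a smooth function of the edge lengths. Consequently $l(t)\to l_{\infty}$ forces $R(l(t))\to R(l_{\infty})$ and $S(l(t))\to S(l_{\infty})$ as $t\to\infty$. This is the step where the assumption that $l_{\infty}$ is non-degenerate is essential: it guarantees that $R$ and $S$ extend continuously to the limit and that no tetrahedron collapses in the limit.

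Next I would integrate the monotonicity identity. Integrating $\dot{S}=-\|R\|^{2}$ over $[0,\infty)$ and using $S(l(t))\to S(l_{\infty})$ gives
\begin{equation*}
\int_{0}^{\infty}\|R(l(t))\|^{2}\,dt=S(l(0))-S(l_{\infty})<\infty,
\end{equation*}
so $\|R(l(t))\|^{2}$ is integrable on $[0,\infty)$. On the other hand, by the continuity established above, $\|R(l(t))\|^{2}\to\|R(l_{\infty})\|^{2}$ as $t\to\infty$. If $\|R(l_{\infty})\|^{2}=c>0$, then $\|R(l(t))\|^{2}\geq c/2$ for all sufficiently large $t$, contradicting the finiteness of the integral. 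Therefore $\|R(l_{\infty})\|^{2}=0$, that is, $R(l_{\infty})=0$.

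Finally, recall from the analysis of critical points that a metric with $R=0$ is exactly a discrete Ricci flat metric. Thus $l_{\infty}$ is a discrete Ricci flat metric, which in particular shows that such a metric exists, proving both assertions. I expect the only genuinely delicate point to be the justification that $R$ and $S$ pass continuously to the limit; this is where the hypothesis that $l_{\infty}$ is non-degenerate does the real work, since on the boundary of $\mathfrak{M}_{l}$ the dihedral angles need not extend continuously and the argument would break down.
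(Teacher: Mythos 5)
Your proposal is correct and follows essentially the same route as the paper: both rest on the monotonicity $\dot{S}=-\|R\|^{2}\leq 0$ coming from the Schl\"{a}fli formula, use the non-degeneracy of $l_{\infty}$ to pass $R$ and $S$ continuously to the limit, and conclude $R(l_{\infty})=0$. The only (cosmetic) difference is the final calculus step, where you integrate $\dot{S}$ and invoke integrability of $\|R\|^{2}$, while the paper argues directly that the limit of $\dot{S}$ exists and must vanish for a nonincreasing convergent function; your version spells out the justification a bit more fully.
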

\begin{proof}
Under the combinatorial Ricci flow (\ref{CRF}), we have
\begin{equation*}
\dot{S}=-\|R\|^{2}\leq 0.
\end{equation*}
The existence of $\dot{S}_{\infty}$ follows from the existence of $l_{\infty}$. This implies $\dot{S}_{\infty}=0$ due to that $S$ is nonincreasing along the combinatorial Ricci flow (\ref{CRF}). Hence $R_{\infty}=0$.
\end{proof}

\begin{theorem}
If the solution of the flow (\ref{NCRF}) exists for all time and converge to a non-degenerate PL-metric $l_{\infty}$, then the discrete Einstein metric exists. Moreover, $l_{\infty}$ is indeed one.
\end{theorem}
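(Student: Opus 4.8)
The plan is to mimic the proof of the preceding theorem for the unnormalized flow (\ref{CRF}), replacing the monotone quantity $\dot S=-\|R\|^{2}$ there by the expression $\dot S=-\|R-\lambda l\|^{2}$ that was computed above for the normalized flow (\ref{NCRF}). First I would record that, because $\|l\|^{2}$ is constant along (\ref{NCRF}) and $l(t)\to l_{\infty}$ with $l_{\infty}$ nondegenerate, all the geometric quantities involved are controlled in the limit: the dihedral angles, hence $R$, $S$ and $\lambda=S/\|l\|^{2}$, are smooth functions of $l$ on the open set $\mathfrak{M}_{l}$, and $l_{\infty}\in\mathfrak{M}_{l}$ by nondegeneracy. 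Therefore $R(l(t))\to R(l_{\infty})=:R_{\infty}$, $S(l(t))\to S(l_{\infty})=:S_{\infty}$, and $\lambda(t)\to S_{\infty}/\|l_{\infty}\|^{2}=:\lambda_{\infty}$.

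Next I would argue that $\dot S_{\infty}=0$. Along (\ref{NCRF}) we have already shown $\dot S=-\|R-\lambda l\|^{2}\le 0$, so $S$ is nonincreasing; since it converges to the finite value $S_{\infty}$, it is in particular bounded below along the flow. By the continuity just noted, $\dot S(t)=-\|R(t)-\lambda(t)l(t)\|^{2}\to -\|R_{\infty}-\lambda_{\infty}l_{\infty}\|^{2}$, so the limit $\dot S_{\infty}$ exists. A monotone $C^{1}$ function whose value converges to a finite limit and whose derivative converges cannot have a strictly negative limiting derivative: if $\dot S_{\infty}<0$, then $\dot S(t)$ would stay below $\dot S_{\infty}/2<0$ for all large $t$, forcing $S(t)\to-\infty$ and contradicting $S(t)\to S_{\infty}$. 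Hence $\dot S_{\infty}=0$, which gives $\|R_{\infty}-\lambda_{\infty}l_{\infty}\|^{2}=0$, i.e.\ $R_{\infty}=\lambda_{\infty}l_{\infty}$.

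Finally, the relation $R_{\infty}=\lambda_{\infty}l_{\infty}$ is exactly the defining condition of a discrete Einstein metric with constant $\lambda_{\infty}$, so $l_{\infty}$ is a discrete Einstein metric and in particular one exists. I expect the only delicate point to be the middle step, namely upgrading the convergence of $S$ to the vanishing of $\dot S_{\infty}$. This requires both that $l_{\infty}$ be nondegenerate, so that the limit lies in the domain of smoothness and $R_{\infty}$, $\lambda_{\infty}$ are genuine limits, and the elementary but essential observation that a convergent, monotone, continuously differentiable function with convergent derivative must have derivative tending to zero; everything else is a direct transcription of the argument for the unnormalized flow.
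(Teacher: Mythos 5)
Your proposal is correct and follows essentially the same route as the paper: the paper uses the monotone quantity $\lambda=S/\|l\|^{2}$ with $\dot{\lambda}=-\|R-\lambda l\|^{2}/\|l\|^{2}\leq 0$, which, since $\|l\|^{2}$ is constant along (\ref{NCRF}), is just your $\dot S=-\|R-\lambda l\|^{2}$ divided by a constant. Your write-up merely makes explicit the elementary step (a convergent monotone $C^{1}$ function with convergent derivative has limiting derivative zero) that the paper asserts without comment.
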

\begin{proof}
Under the normalized combinatorial Ricci flow (\ref{NCRF}), we have
\begin{equation*}
\dot{\lambda}=-\frac{\|R-\lambda l\|^{2}}{\|l\|^{2}}\leq 0.
\end{equation*}
The existence of $\dot{\lambda}_{\infty}$ follows from the existence of $l_{\infty}$. This implies $\dot{\lambda}_{\infty}=0$ due to that $\lambda$ is nonincreasing along the normalized combinatorial Ricci flow (\ref{NCRF}). Hence $R_{\infty}-\lambda_{\infty}l_{\infty}=0$.
\end{proof}

\begin{theorem}
Given a nondegenerate metric $l$. Assume there exists a discrete Einstein metric $l_{DE}$ such that $R_{DE}=\lambda l_{DE}$ with $\lambda_{DE}(I_{m}-\frac{l_{DE}l_{DE}^{T}}{\|l_{DE}\|^{2}})-L_{DE}\leq 0$, where $I_{m}$ means the identity matrix of $m\times m$,  then there exists a small constant $\varepsilon >0$, if
\begin{equation*}
\|R(0)-R_{DE}\|<\varepsilon,
\end{equation*}
then the normalized combinatorial two-order flow (\ref{NCRF}) with initial metric $l(0)=l$ has long time existence and the metrics converge to the discrete Einstein metric $l_{DE}$.
\end{theorem}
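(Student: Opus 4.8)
The plan is to treat \eqref{NCRF} as an autonomous ODE $\dot l = f(l)$ with $f(l) = -R + \tfrac{S}{\|l\|^2}\,l$ on the open set $\mathfrak{M}_l$, to recognize $l_{DE}$ as an equilibrium, and to deduce convergence from a linearized (Lyapunov) stability analysis at $l_{DE}$ after restricting to the flow-invariant sphere $\Sigma=\{l:\|l\|=\|l(0)\|\}$. First I would check that $l_{DE}$ is a fixed point: since $R_{DE}=\lambda l_{DE}$ gives $S_{DE}=\langle R_{DE},l_{DE}\rangle=\lambda\|l_{DE}\|^2$, the normalizing factor at $l_{DE}$ is $S_{DE}/\|l_{DE}\|^2=\lambda$, so $f(l_{DE})=-R_{DE}+\lambda l_{DE}=0$. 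Because $R$ is scale invariant, the whole ray $\{t\,l_{DE}:t>0\}$ consists of rescaled discrete Einstein metrics, and since $\tfrac{d}{dt}\|l\|^2=0$ the flow stays on $\Sigma$; the geometrically meaningful target is the unique Einstein metric on that ray with $\|l\|=\|l(0)\|$, which is $l_{DE}$ under the normalization $\|l(0)\|=\|l_{DE}\|$.

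The main computation is the Jacobian of $f$ at $l_{DE}$. Using $\nabla_l S=R$ and $\mathrm{Hess}_l S=L$, together with $\partial_l\big(\tfrac{S}{\|l\|^2}l\big)=\tfrac{S}{\|l\|^2}I_m+\tfrac{l R^T}{\|l\|^2}-\tfrac{2S\,l l^T}{\|l\|^4}$, and then substituting $R_{DE}=\lambda l_{DE}$ and $S_{DE}=\lambda\|l_{DE}\|^2$, I obtain $Df(l_{DE})=\lambda\big(I_m-\tfrac{l_{DE}l_{DE}^T}{\|l_{DE}\|^2}\big)-L_{DE}$, which is exactly the matrix in the hypothesis. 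Thus the assumption says precisely $Df(l_{DE})\le 0$. The Euler formula $L_{DE}l_{DE}=0$ together with $\big(I_m-\tfrac{l_{DE}l_{DE}^T}{\|l_{DE}\|^2}\big)l_{DE}=0$ show that $l_{DE}$ spans a null direction of $Df(l_{DE})$; this is the radial scaling direction, which is transverse to $\Sigma$.

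I would then restrict the dynamics to $\Sigma$, whose tangent space at $l_{DE}$ is $l_{DE}^\perp$. On $l_{DE}^\perp$ the projection term drops out and the linearization acts as $\lambda I_m-L_{DE}$, negative semidefinite by hypothesis; equivalently, since $\dot\lambda\le 0$ was already established, the quantity $\lambda$ (equivalently $S_1=S/\|l\|$, for which \eqref{NCRF} is, up to the constant factor $\|l\|$, the gradient flow on $\Sigma$) serves as a Lyapunov function with $l_{DE}$ a local minimum. To convert closeness of curvatures into closeness of metrics I would use that $l\mapsto R$ is a local diffeomorphism of $\Sigma$ near $l_{DE}$ whenever $L_{DE}$ is nondegenerate on $l_{DE}^\perp$, so that $\|R(0)-R_{DE}\|<\varepsilon$ forces $l(0)$ into a prescribed neighborhood of $l_{DE}$; long-time existence then follows because exponential decay keeps $l(t)$ inside $\mathfrak{M}_l$.

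The hard part is the gap between the hypothesis, which only gives $Df(l_{DE})\le 0$, and the strict negative definiteness of the restriction $\lambda I_m-L_{DE}$ on $l_{DE}^\perp$ that the standard asymptotic-stability theorem for hyperbolic equilibria demands. The crux is therefore to exclude additional null directions inside $l_{DE}^\perp$ — one expects, via the Euler formula, that $l_{DE}$ is the only scaling null direction, so that $\lambda I_m - L_{DE}$ is strictly negative definite transversally — and, should strictness genuinely fail, to upgrade stability to convergence by a LaSalle invariance-principle or \L ojasiewicz-gradient-inequality argument built on the monotone Lyapunov function $\lambda$, together with verifying that the limit set of any nearby trajectory reduces to the single point $l_{DE}$.
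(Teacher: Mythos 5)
Your proposal follows essentially the same route as the paper: linearize the normalized flow at $l_{DE}$, compute $D_l\Upsilon(l_{DE})=\lambda_{DE}\big(I_m-\frac{l_{DE}l_{DE}^{T}}{\|l_{DE}\|^{2}}\big)-L_{DE}$ exactly as the paper does, and invoke stability of the equilibrium, with your sphere restriction and Lyapunov-function remarks being refinements of the same idea. The gap you flag at the end --- that negative semidefiniteness alone does not yield asymptotic stability, so one must either exclude null directions transverse to the scaling direction $l_{DE}$ or fall back on a LaSalle/\L ojasiewicz argument --- is genuine, but it is equally present and unaddressed in the paper's own proof, which simply asserts that the equilibrium is a local attractor from $D_l\Upsilon(l_{DE})\le 0$; your treatment is, if anything, the more careful of the two.
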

\begin{proof}
We want to prove $l_{DE}$ is a local attractor of the flow. By the evolution equation of the combinatorial two-order flow
\begin{equation*}
\dot{l}=\Upsilon(l)=-R+\lambda l.
\end{equation*}
We have the ODE system
$$\Upsilon(l_{DE})=-R_{DE}+\lambda_{DE} l_{DE}=0,$$
and now we compute the differential of $\Upsilon (l)=-R+\lambda l$ at $l$.
\begin{equation}
\begin{aligned}
D_{l}\Upsilon(l)&=-D_{l}R+\lambda D_{l}l+lD_{l}\Upsilon\\
&=-L^{T}+\lambda I_{m}+l(\frac{L^{T}l+R}{\|l\|^{2}}-\frac{2Sl}{\|l\|^{4}})^{T}\\
&=\lambda I_{m}-L+\frac{lR^{T}}{\|l\|^{2}}-2S\frac{ll^{T}}{\|l\|^{4}}\\
&=\lambda (I_{m}-\frac{ll^{T}}{\|l\|^{2}})-L+\frac{l(R-\lambda l)^{T}}{\|l\|^{2}}
\end{aligned}
\end{equation}
we have used the symmetry of $L$ and the Euler formula in the third equality. So
$$D_{l}\Upsilon(l)|_{l=l_{DE}}=\lambda_{DE}(I-\frac{l_{DE}l_{DE}^{T}}{\|l_{DE}\|^{2}})-L_{DE}\leq 0.$$
So $R_{DE}$ is a local attractor of the flow. The system is asymptotically stable at $R_{DE}$. The following three conditions are equivalent
\begin{enumerate}
\item The initial metric $l(0)$ is close to $l_{DE}$;
\item The initial Ricci curvature $R(0)$ is close to $R_{DE}$;
\item The initial quadratic energy $\mathcal{C}(0)$ is close to the quadratic energy $\mathcal{C}(l_{DE})$ of the discrete Einstein metric $l_{DE}$.
\end{enumerate}
They both imply the long time existence and convergence of the (normalized) combinatorial two-order flow.

\end{proof}

\begin{remark}
Assuming $\lambda_1(L_{DE})>\lambda_{DE}$, the first eigenvalue of $L_{DE}$ at $l_{DE}$, satisfies $\lambda_1(L_{DE})>\lambda_{DE}$, then one get $\lambda_{DE}(I_{m}-\frac{l_{DE}l_{DE}^{T}}{\|l_{DE}\|^{2}})-L_{DE}\leq 0$, which can be shown by same methods form the proof of Theorem 5.4 in [G3].
\end{remark}

\section{Fourth order flow}
In this section, we consider the combinatorial four-order flow
\begin{equation}\label{LCF}
\dot{l}=-L^{T}R
\end{equation}
where $L^{T}$ means the transpose of $L$. Since (\ref{gradientC}), the combinatorial four-order flow can be showed as a gradient flow of energy $\mathcal{C}$, i.e.,
\begin{equation}
\dot{l}=-\frac{1}{2}\nabla_{l}\mathcal{C}=-L^{T}R.
\end{equation}
It is easy to obtain the flowing evolution equations:
\begin{equation}
\dot{R}=-LL^{T}R;
\end{equation}
\begin{equation}
\dot{\mathcal{C}}=-2R^{T}LL^{T}R=-2(L^{T}R)^{T}(L^{T}R)=-\frac{1}{2}||\nabla_{l}\mathcal{C}||^{2}\leq 0;
\end{equation}
\begin{equation}
\dot{S}=(\dot{R})^{T}l+R^{T}\dot{l}=-(LL^{T}R)^{T}l+R^{T}(-L^{T}R)=-R^{T}LL^{T}l-R^{T}L^{T}R=-R^{T}L^{T}R.
\end{equation}

And it is easy to compute the $L$ at the regular point $l=\{1,1,\cdot,1\}$ have the rank $m-1$. We consider the set of metric $l$ such that
$$\mathcal{M}_{l}^{*}\doteq \{l:E\rightarrow (0,+\infty)|(M^{3},\mathcal{T},l) \quad\mathrm{is}\quad PL-\mathrm{manifold\quad and}\quad \mathrm{rank}L=m-1\}.$$
Then $\mathcal{M}_{l}^{*}$ is an open set and the regular points in the set of $\mathcal{M}_{l}^{*}$.
\begin{theorem}
If the solution of the flow (\ref{LCF}) exists for all time and converge to a non-degenerate PL-metric $l_{\infty} \in \mathcal{M}_{l}^{*}$, then the discrete Einstein metric exists. Moreover, $l_{\infty}$ is indeed one.
\end{theorem}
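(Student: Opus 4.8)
The plan is to mirror the strategy of Theorems 2.1 and 2.2, using the monotone energy $\mathcal{C}=\|R\|^{2}$ in place of $S$ or $\lambda$, but with an extra linear-algebra step at the end where the hypothesis $l_\infty\in\mathcal{M}_l^{*}$ enters decisively. First I would record that along the flow (\ref{LCF}) the energy satisfies $\dot{\mathcal{C}}=-2\|L^{T}R\|^{2}\le 0$, so $\mathcal{C}$ is nonincreasing and bounded below by $0$. Since the metrics converge, $l(t)\to l_\infty$, continuity gives $\mathcal{C}(l(t))\to\mathcal{C}(l_\infty)$, so the limit $\mathcal{C}_\infty$ exists.

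Next, the quantity $\dot{\mathcal{C}}(l)=-2\|L^{T}R\|^{2}$ is itself a continuous function of $l$, since both $L$ and $R$ depend smoothly on $l$ on the open set of nondegenerate metrics; hence $\dot{\mathcal{C}}(l(t))\to-2\|L_\infty^{T}R_\infty\|^{2}$. If this limit were strictly negative, then $\dot{\mathcal{C}}$ would be bounded away from $0$ for all large $t$ and $\mathcal{C}$ would decrease without bound, contradicting the convergence of $\mathcal{C}$ just established. Therefore $L_\infty^{T}R_\infty=0$, and since $L$ is symmetric this reads $L_\infty R_\infty=0$, i.e. $R_\infty\in\ker L_\infty$.

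The final step uses the hypothesis $l_\infty\in\mathcal{M}_l^{*}$, that is $\mathrm{rank}\,L_\infty=m-1$, so $\ker L_\infty$ is one-dimensional. By the Euler formula (\ref{EulerFormula}) we have $L_\infty l_\infty=0$, and since $l_\infty$ is a nondegenerate metric it is a nonzero vector, whence $\ker L_\infty=\mathrm{span}\{l_\infty\}$. Combining this with $R_\infty\in\ker L_\infty$ forces $R_\infty=\lambda l_\infty$ for some scalar $\lambda$, which is exactly the condition that $l_\infty$ be a discrete Einstein metric.

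The main obstacle is the last step, and this is where the fourth-order flow genuinely differs from the second-order case. For the flows (\ref{CRF}) and (\ref{NCRF}) the monotone quantity directly forced $R_\infty$ to vanish or to be parallel to $l_\infty$; here monotonicity of $\mathcal{C}$ yields only the weaker conclusion $R_\infty\in\ker L_\infty$, and without the rank-$(m-1)$ assumption this kernel could be higher-dimensional, leaving $R_\infty$ undetermined. The rank condition collapses the kernel to the single line through $l_\infty$ and is precisely what converts $L_\infty R_\infty=0$ into the Einstein relation. Some care is also needed in the passage from convergence of $l(t)$ to $\dot{\mathcal{C}}_\infty=0$: as in the earlier proofs it relies on $\dot{\mathcal{C}}$ being continuous in $l$ together with $\mathcal{C}$ being bounded below.
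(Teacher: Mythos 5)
Your proposal is correct and follows essentially the same route as the paper: monotonicity and convergence of $\mathcal{C}$ force $L_\infty^{T}R_\infty=0$, and then the rank-$(m-1)$ hypothesis together with the Euler formula $Ll=0$ identifies $\ker L_\infty$ with $\mathrm{span}\{l_\infty\}$, giving $R_\infty=\lambda_\infty l_\infty$. You merely spell out more carefully the step from convergence of $l(t)$ to $\dot{\mathcal{C}}_\infty=0$ and the use of the symmetry of $L$, both of which the paper leaves implicit.
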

\begin{proof}
Under the combinatorial four-order flow (\ref{LCF}), we have
\begin{equation*}
\dot{\mathcal{C}}\leq 0.
\end{equation*}
The existence of $\dot{\mathcal{C}}$ follows from the existence of $l_{\infty}$. This implies $\dot{\mathcal{C}}=0$ due to that $\mathcal{C}$ is nonincreasing along the combinatorial four-order flow (\ref{LCF}). Hence $L_{\infty}^{T}R_{\infty}=0$. Since $l_{\infty} \in \mathcal{M}_{l}^{*}$, $\mathrm{L}_{\infty}=m-1$ and $\ker{\mathrm{L}_{\infty}}=\{tl|t\in \mathds{R}\}$, so $R_{\infty}=\lambda_{\infty} l_{\infty}$ for some $\lambda_{\infty}$.
\end{proof}

\begin{theorem}
Assume there exists a discrete Einstein metric $l_{DE}\in \mathcal{M}_{l}^{*}$ such that $R_{DE}=\lambda l_{DE}$, then there exists a small constant $\varepsilon >0$, if
\begin{equation*}
\|R(0)-R_{DE}\|<\varepsilon,
\end{equation*}
then the combinatorial four-order flow
\begin{equation}\label{NLCF}
\dot{l}=L^{T}(R_{DE}-R)
\end{equation}
with initial data $l(0)$ has long time existence and the metrics converge to the discrete Einstein metric $l_{DE}$.
\end{theorem}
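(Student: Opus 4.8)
The plan is to treat $l_{DE}$ --- more precisely the whole ray through it --- as an asymptotically stable equilibrium of the vector field $\Psi(l)=L^{T}(R_{DE}-R)$, using a conserved quantity to kill the unavoidable degeneracy coming from scale invariance. First I would record two structural facts. Since curvature is scale invariant, $R(tl_{DE})=R(l_{DE})=R_{DE}$ for every $t>0$, so $\Psi(tl_{DE})=L^{T}(R_{DE}-R_{DE})=0$; the entire positive ray $\{tl_{DE}:t>0\}$ consists of fixed points. Moreover the norm is conserved, since differentiating and using the Euler formula $Ll=0$ gives
\begin{equation*}
\frac{d}{dt}\|l\|^{2}=2\langle l,\Psi(l)\rangle=2\langle Ll,R_{DE}-R\rangle=0.
\end{equation*}
Thus the flow preserves each sphere $\Sigma=\{\|l\|=\|l(0)\|\}$, which meets the positive ray in the single point $\hat{l}_{DE}=(\|l(0)\|/\|l_{DE}\|)\,l_{DE}$, the candidate limit (equal to $l_{DE}$ when $\|l(0)\|=\|l_{DE}\|$).

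Second, I would introduce the Lyapunov function $F(l)=\|R-R_{DE}\|^{2}$. Using $\dot R=L\dot l=-LL^{T}(R-R_{DE})$ one computes
\begin{equation*}
\dot F=2(R-R_{DE})^{T}\dot R=-2\,\|L^{T}(R-R_{DE})\|^{2}\leq 0,
\end{equation*}
with equality exactly at the fixed points. So $F$ is non-increasing, and the hypothesis $\|R(0)-R_{DE}\|<\varepsilon$ reads precisely $F(l(0))<\varepsilon^{2}$, whence $F(l(t))<\varepsilon^{2}$ for as long as the solution exists.

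Third, I would pin down stability transverse to the ray and assemble existence and convergence. Linearizing $\Psi$ at $\hat{l}_{DE}$, the term involving $D_{l}L^{T}$ is annihilated by the factor $R_{DE}-R(\hat{l}_{DE})=0$, leaving $D_{l}\Psi|_{\hat{l}_{DE}}=-L^{T}L=-L_{DE}^{2}$, which is symmetric negative semidefinite with kernel exactly $\mathrm{span}(l_{DE})$, because $\hat{l}_{DE}\in\mathcal{M}_{l}^{*}$ forces $\mathrm{rank}\,L=m-1$ and $\ker L=\{tl_{DE}\}$. The kernel is the scaling direction, transverse to $\Sigma$, so on $T_{\hat{l}_{DE}}\Sigma=l_{DE}^{\perp}$ the linearization is negative definite. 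Equivalently, since $L=DR$ restricts to an immersion of $\Sigma$ near $\hat{l}_{DE}$, the function $F$ controls $\|l-\hat{l}_{DE}\|$ on $\Sigma$, so small sublevel sets of $F$ on $\Sigma$ are compact neighborhoods of $\hat{l}_{DE}$ contained in $\mathcal{M}_{l}^{*}$. For $\varepsilon$ small the orbit is trapped in such a set: it can neither reach the degenerate boundary of $\mathcal{M}_{l}^{*}$ nor escape to infinity, which yields long time existence. LaSalle's invariance principle then confines the $\omega$-limit set to the largest invariant subset of $\{\dot F=0\}\cap\Sigma$, which near $\hat{l}_{DE}$ is the single point $\hat{l}_{DE}$; hence $l(t)\to\hat{l}_{DE}$, a discrete Einstein metric with curvature $R_{DE}$.

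The main obstacle is the degeneracy of the linearization: the zero eigenvalue along $l_{DE}$ blocks a direct appeal to asymptotic stability. The point that resolves it is the conservation of $\|l\|$, which confines the dynamics to a sphere transverse to the degenerate direction and restores strict negative definiteness there (equivalently, it exhibits the ray of equilibria as a normally hyperbolic attracting manifold). Alongside this, the one genuinely analytic step is checking that small $\varepsilon$ keeps the orbit uniformly inside $\mathcal{M}_{l}^{*}$, so that the rank and kernel description of $L$ persist along the flow; this is exactly what legitimizes both the long time existence and the LaSalle conclusion.
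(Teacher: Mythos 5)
Your proposal is correct and, in fact, more complete than the paper's own argument, while following the same basic strategy. The paper's proof consists of exactly three computations: the evolution equation $\dot R = LL^{T}(R_{DE}-R)$, the monotonicity $\dot{\mathcal C}=-2(R_{DE}-R)^{T}LL^{T}(R_{DE}-R)\le 0$ for $\mathcal C=\|R-R_{DE}\|^{2}$ (your $F$), and the linearization $D_{l}\Upsilon|_{l_{DE}}=-L_{DE}L_{DE}^{T}\le 0$; it then immediately declares $l_{DE}$ a local attractor. That last inference is a genuine gap: $-L_{DE}L_{DE}^{T}$ is only negative \emph{semi}definite, with kernel $\ker L_{DE}=\mathrm{span}(l_{DE})$ (since $l_{DE}\in\mathcal M_{l}^{*}$), and semidefiniteness alone does not give asymptotic stability. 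Everything you add beyond the paper is precisely what is needed to close this gap: the observation that the whole ray $\{tl_{DE}\}$ is fixed (explaining the kernel), the conservation of $\|l\|^{2}$ via the Euler formula $Ll=0$, the resulting restriction of the dynamics to a sphere transverse to the degenerate direction on which $-L^{2}$ is negative definite, and the LaSalle argument identifying the limit. You also correctly observe that the limit is the rescaled metric $\hat l_{DE}=(\|l(0)\|/\|l_{DE}\|)l_{DE}$ rather than $l_{DE}$ itself unless $\|l(0)\|=\|l_{DE}\|$, a point the paper's statement glosses over.

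One caveat applies equally to your write-up and to the theorem as stated: the hypothesis is smallness of $\|R(0)-R_{DE}\|$, but the stability argument requires $l(0)$ to lie near the ray through $l_{DE}$. Since $R$ restricted to the sphere is only a local diffeomorphism near $\hat l_{DE}$, curvature-closeness implies metric-closeness only within a neighborhood where $R|_{\Sigma}$ is injective; strictly speaking the hypothesis should be on $l(0)$ (or one should add that $l(0)$ lies in such a neighborhood). This is a defect of the statement itself rather than of your argument, and the paper makes the identical elision in its earlier second-order theorem.
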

\begin{proof}
Under the normalized four-order flow (\ref{NLCF}),
$$\dot{R}=\frac{\partial R}{\partial l}\dot{l}=LL^{T}(R_{DE}-R),$$
$$\dot{\mathcal{C}}=-2(R_{DE}-R)^{T}LL^{T}(R_{DE}-R)\leq 0,$$
where $\mathcal{C}=\sum_{i=1}^{m}((R_{DE})_{i}-R_{i})^{2}$.
Now we consider the ODE system
$$\dot{l}=\Upsilon(l)=L^{T}(R_{DE}-R).$$
Then
$$\Upsilon(l_{DE})=0,$$
and
$$D_{l}\Upsilon(l)|_{l=l_{DE}}=-L_{DE}L_{DE}^{T}\leq 0.$$
Hence $R_{DE}$ is a local attractor, the system is asymptotically at $R_{DE}$.
\end{proof}

\section{$K$-space form triangulation and discrete curvature flow}

Assuming $K\in \mathds{R}$ is a constant, moreover, $K\neq 0$. In this section, we will consider a 3-dimensional compact manifold $M^3$ with a $K$-space form triangulation $\mathcal{T}$ on $M^3$. Let $M_K$ be the space form with constant sectional curvature $K$. The basic blocks of $K$-space form triangulation $\mathcal{T}$ are tetrahedrons embedded in $M_K$.

A tetrahedron embedded in $M_K$ is determined by its six edge lengths. Not every group of six positive numbers can be realized as the six edge lengths of some tetrahedrons embedded in $M_K$. Similar with Euclidean case, there are nondegenerate conditions too. However, we know that, all admissible group of six positive numbers which can be realized as the six edge lengths of some tetrahedrons embedded in $M_K$ make an open connected set in $\mathds{R}^6_{>0}$.

The combinatorial Ricci curvature $R_{ij}$ is defined as the same of the Euclidean PL-manifold. We need to define a new functional $S_K$ corresponding the total curvature functional $S$,
\begin{definition}
Define $V=\sum_{\{i,j,k,l\}\in T}\mathrm{V}_{ijkl}$ and define
$$S_K\triangleq 2KV+\sum_{i=1}^{m}R_{i}l_{i}.$$
\end{definition}

Now we recall the famous Schl\"{a}fli formula for a $K$-space form tetrahedra. For any $K$-space form tetrahedra $\{i,j,k,l\}\in T$, one have (see [Sc])
$$\frac{\partial \mathrm{V}_{ijkl}}{\partial \beta_{pq}}=\frac{l_{pq}}{2K}, p,q\in \{i,j,k,l\}.$$
Here $\beta_{pq}$ mean the dihedral angle at the edge $\{p,q\}$ in the tetrahedra $\{i,j,k,l\}$. Using above formula, one can get
$$2KdV+\sum_{i=1}^{m}l_{i}dR_{i}=0.$$
Hence
\begin{equation*}
dS_K=2KdV+\sum_{i=1}^{m}(l_{i}dR_{i}+R_{i}dl_{i})=\sum_{i=1}^{m}R_{i}dl_{i},
\end{equation*}
which implies $\frac{\partial S_K}{\partial l_i}=R_i$. Then we have

\begin{proposition}
$\quad$

\begin{enumerate}
\item $\nabla_{l}S_K=R$;
\item $\mathrm{Hess}_{l}S_K=L$.
\end{enumerate}
\end{proposition}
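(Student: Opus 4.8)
The plan is to read off both assertions from the first-variation identity for $S_K$ established just above the statement, namely
$$dS_K = \sum_{i=1}^m R_i\, dl_i.$$
This identity rests only on the definition $S_K = 2KV + \sum_i R_i l_i$, the product rule applied to each term $R_i l_i$, and the $K$-space form Schl\"afli formula $2K\,dV + \sum_i l_i\,dR_i = 0$ quoted from [Sc]. First I would note that on the open nondegenerate domain $\mathfrak{M}_l$ every volume $\mathrm{V}_{ijkl}$ and every dihedral angle $\beta_{pq}$ depends smoothly on the edge vector $l = (l_1,\dots,l_m)$, so $S_K$ is a smooth function there and the displayed identity is an honest equality of $1$-forms whose coefficients may be compared.

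For part (1), comparing the coefficient of $dl_i$ on the two sides of the identity gives $\partial S_K/\partial l_i = R_i$ for every $i$; assembling these into a column vector is precisely $\nabla_l S_K = R$.

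For part (2), I would differentiate the gradient once more. Since part (1) identifies the map $l \mapsto \nabla_l S_K$ with the curvature map $l \mapsto R(l)$, the Hessian of $S_K$ is nothing but the Jacobian of $R$: its $(i,j)$ entry is
$$\bigl(\mathrm{Hess}_l S_K\bigr)_{ij} = \frac{\partial^2 S_K}{\partial l_j\,\partial l_i} = \frac{\partial R_i}{\partial l_j} = L_{ij},$$
so $\mathrm{Hess}_l S_K = L$ by the very definition of $L$. The equality of mixed second partials of the smooth function $S_K$ moreover re-derives the symmetry of $L$, just as in the Euclidean case handled in Section 1.

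The genuine content lies entirely in the Schl\"afli formula $\partial \mathrm{V}_{ijkl}/\partial \beta_{pq} = l_{pq}/(2K)$ in $M_K$ and in the chain-rule bookkeeping that converts it into $2K\,dV + \sum_i l_i\,dR_i = 0$; once this is granted via [Sc], only elementary calculus remains, so I expect no real obstacle. The one point deserving care is sign and factor consistency across the sum over tetrahedra: because $R_i = 2\pi - \sum \beta_{i,\cdot}$ one has $dR_i = -\sum d\beta_{i,\cdot}$, and the term $2K\,dV = \sum_i l_i \sum d\beta_{i,\cdot}$ is exactly what cancels $\sum_i l_i\,dR_i$, leaving only $\sum_i R_i\,dl_i$.
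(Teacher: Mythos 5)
Your proposal is correct and follows the same route as the paper, which derives $dS_K=\sum_i R_i\,dl_i$ from the $K$-space form Schl\"afli formula $2K\,dV+\sum_i l_i\,dR_i=0$ and reads off the gradient, with the Hessian then being the Jacobian $L=\partial R/\partial l$ by definition. Your sign bookkeeping ($dR_i=-\sum d\beta$ cancelling against $2K\,dV$) matches the paper's computation exactly.
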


\begin{proposition}
$L$ is symmetric, nonsingular and indefinite.
\end{proposition}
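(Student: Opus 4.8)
The plan is to treat the three assertions separately, exploiting the decomposition of $L$ into local tetrahedral blocks. Symmetry is immediate: by the preceding Proposition $L=\mathrm{Hess}_{l}S_{K}$, and the Hessian of any $C^{2}$ function satisfies $\frac{\partial^{2}S_{K}}{\partial l_{a}\partial l_{b}}=\frac{\partial^{2}S_{K}}{\partial l_{b}\partial l_{a}}$, i.e.\ $\frac{\partial R_{a}}{\partial l_{b}}=\frac{\partial R_{b}}{\partial l_{a}}$. At the structural level I would record that $R_{a}=2\pi-\sum_{\tau}\beta_{e,\tau}$ forces $L=-\sum_{\tau}J_{\tau}$, where each $J_{\tau}=\big(\partial\beta_{e}/\partial l_{e'}\big)_{e,e'\in\tau}$ is the $6\times6$ Jacobian of the dihedral angles of a single space-form tetrahedron $\tau$ in its edge lengths (embedded into the $m\times m$ index set). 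The Schl\"{a}fli relation $\partial \mathrm{V}_{ijkl}/\partial\beta_{pq}=l_{pq}/(2K)$ shows $\partial l_{e}/\partial\beta_{e'}$ is symmetric, hence so is its inverse $J_{\tau}$; this re-derives symmetry block by block.

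For nonsingularity the key observation is that the Euclidean degeneracy is destroyed by $K\neq0$. In the Euclidean setting scale invariance $R(tl)=R(l)$ gives the Euler identity $Ll=0$, so $L$ is singular with $l\in\ker L$. When $K\neq0$ this fails, because a tetrahedron in $M_{K}$ with edge lengths $tl$ is isometric, after rescaling the ambient metric by $t^{-2}$, to a tetrahedron in $M_{t^{2}K}$ with edge lengths $l$; angles are preserved, so $R(tl;K)=R(l;t^{2}K)$ genuinely depends on $t$. Differentiating at $t=1$ and using $\nabla_{l}S_{K}=R$ yields
\[
l^{\mathrm{T}}Ll=2K\sum_{a}l_{a}\frac{\partial R_{a}(l;s)}{\partial s}\Big|_{s=K},
\]
and since every dihedral angle strictly increases with the ambient curvature $s$ (so $\partial R_{a}/\partial s<0$), this quantity is nonzero with sign $-\mathrm{sign}(K)$. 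Thus $l\notin\ker L$. To pass from $l\notin\ker L$ to $\det L\neq0$ I would combine this with the local rigidity of space-form tetrahedra — edge lengths determine the tetrahedron, so each $J_{\tau}$ is invertible — and with a perturbation from the Euclidean model, where $L$ has rank $m-1$ with simple kernel $\mathrm{span}(l)$; turning on $K$ displaces the single zero eigenvalue to $l^{\mathrm{T}}Ll/\|l\|^{2}\neq0$.

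For indefiniteness I would argue that the transverse part of $L$ already carries mixed signature. Since the Euclidean $L$ has rank $m-1$ and kernel $\mathrm{span}(l)$, the content lies in the signature on $l^{\perp}$; I would show it is indefinite by exhibiting explicit length variations that raise some dihedral angles while lowering others (for instance stretching one edge of a tetrahedron while contracting the opposite edge changes different $R_{e}$ with opposite signs), producing test vectors $u,w$ with $u^{\mathrm{T}}Lu>0$ and $w^{\mathrm{T}}Lw<0$. The displayed computation of $l^{\mathrm{T}}Ll$ supplies one definite direction, and a complementary variation supplies the other; since indefiniteness is an open condition it persists under the small-$K$ perturbation from the Euclidean block.

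The hard part will be the indefiniteness claim, for two reasons. First, the signature of the local $6\times6$ Jacobian $J_{\tau}$ on $l^{\perp}$ is not formal: it is implied neither by symmetry nor by rigidity, and pinning it down requires either an explicit eigen-analysis of a space-form tetrahedron or an appeal to the known convexity/concavity of space-form volume as a function of dihedral angles, via $\mathrm{Hess}_{\beta}\mathrm{V}=\tfrac{1}{2K}\,\partial l/\partial\beta$. Second, signature does not transfer automatically from the local blocks $J_{\tau}$ to the assembled matrix $L=-\sum_{\tau}J_{\tau}$, so a genuinely global argument (or a reduction to a regular symmetric configuration such as the $\mathbb{S}^{3}$ example of Example~\ref{ex}) is needed to conclude. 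I would therefore settle the single-tetrahedron case first and only then address the assembly.
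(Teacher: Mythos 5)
Your symmetry argument (Hessian of $S_K$, plus the block decomposition $L=-\sum_\tau J_\tau$) matches the paper, and your overall architecture --- reduce to $6\times6$ tetrahedral blocks, analyze a symmetric configuration, then assemble --- is also the paper's. But the other two assertions are not actually proved. For nonsingularity, $l^{\mathrm T}Ll\neq 0$ only shows $l\notin\ker L$; it does not exclude a kernel inside $l^{\perp}$. Your fallback, perturbing off the Euclidean model, is intrinsically a small-$K$ statement and gives nothing for a fixed $K\neq 0$ and an arbitrary (non-regular) configuration; and the inference ``edge lengths determine the tetrahedron, so each $J_\tau$ is invertible'' is not valid, since injectivity of a smooth map does not force its Jacobian to be invertible. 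The paper's route is different: it invokes the mutual determination of edge lengths and dihedral angles of a space-form tetrahedron (angles determine the Gram matrix, hence the tetrahedron up to motion, citing [Vi]) to conclude that each block $-L_{ABCD}=\partial\beta/\partial l$ is nonsingular, and it never needs your curvature-rescaling identity. (That identity is correct, and your sign $-\mathrm{sign}(K)$ for $l^{\mathrm T}Ll$ is consistent with the paper's eigenvalue $x+z+4y$ at the regular point, but the monotonicity of dihedral angles in the ambient curvature is itself asserted without proof.)

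For indefiniteness you explicitly leave the decisive step open. The paper carries it out: at the regular tetrahedron $-L_{ABCD}$ is the matrix with diagonal $x$, opposite-edge entries $z$ and adjacent-edge entries $y$, with eigenvalues $x-z$, $x+z-2y$, $x+z+4y$ of multiplicities $3,2,1$; the space-form trigonometric computation in the appendix gives one strictly negative and one strictly positive eigenvalue for every $K$, so the block is indefinite there, and since the blocks are nonsingular on the connected space of nondegenerate tetrahedra the signature cannot change, so every block is indefinite. Your proposed ``stretch one edge, contract the opposite edge'' test vectors would have to be backed by exactly this kind of explicit computation, so nothing is saved. Finally, you are right to flag the assembly step: $L$ is a sum of overlapping zero-padded blocks, and neither nonsingularity nor indefiniteness of the summands passes formally to the sum. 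The paper's proof of Theorem \ref{L-nonsingular} simply asserts the passage, so this gap is shared with the paper, but in your write-up it would still need to be closed rather than merely announced.
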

\begin{proof}
L is the Jacobian of the functional $S_K$, and is always symmetric. We post the rest of the proof to the appendix, see Theorem \ref{L-nonsingular}.
\end{proof}

With $K$-space form triangulation, we consider discrete curvature flow $\dot{l}=-R$ of $2^{th}$ order and flow $\dot{l}=-L^{T}R$ of $4^{th}$ order. Most properties are laid out in the following table.\\

\begin{tabular}{c r l} \hline
Discrete curvature flow of $2^{th}$ order  & { Discrete curvature flow of $4^{th}$ order} \\\hline
$\dot{l}=-R=-\nabla_{l}S_K$ & $\dot{l}=-L^TR=\nabla_{l}\mathcal{C}$ \\
$\dot{R}=-L$ & $\dot{R}=-LL^TR$ \\
$\dot{S}_K=-R^TR=-\mathcal{C}\leq 0$ & $\dot{S}_K=-R^TL^TR$ \\
$\dot{\mathcal{C}}=-2R^TLR$ & $\dot{\mathcal{C}}=-2\|R^TL\|^2\leq 0$ \\\hline\\
\end{tabular}

\begin{theorem}
If the solution of discrete curvature flow $\dot{l}=-L^{T}R$ exist for all time and converge to a nondegenerate metric $l_{\infty}$, then the discrete Ricci curvature of the limit metric $l_{\infty}$ is zero. Moreover, $l_{\infty}$ is a discrete Ricci-flat hyperbolic metric.
\end{theorem}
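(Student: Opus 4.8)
The plan is to reproduce the monotonicity argument used for the fourth-order flow in the Euclidean section, while exploiting the one decisive structural difference of the $K$-space form setting: the matrix $L$ is nonsingular. Along the flow $\dot l=-L^{T}R$ the quadratic energy $\mathcal C=\|R\|^{2}$ satisfies, as recorded in the table above,
\begin{equation*}
\dot{\mathcal C}=-2\|L^{T}R\|^{2}\le 0,
\end{equation*}
so $\mathcal C$ is nonincreasing; being bounded below by $0$ it converges as $t\to\infty$, and the hypothesis $l(t)\to l_{\infty}$ forces $\mathcal C(l(t))\to\mathcal C(l_{\infty})$.

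The first real step is to pass from convergence of $\mathcal C$ to the vanishing of its derivative in the limit. Since $\mathcal C$ is monotone and convergent,
\begin{equation*}
\int_{0}^{\infty}\bigl(-\dot{\mathcal C}\bigr)\,dt=\mathcal C(0)-\mathcal C(l_{\infty})<\infty,
\end{equation*}
while $-\dot{\mathcal C}=2\|L^{T}R\|^{2}$ is continuous in $l$ and therefore tends to $2\|L_{\infty}^{T}R_{\infty}\|^{2}$ along the trajectory as $l(t)\to l_{\infty}$. A nonnegative integrand with convergent improper integral and with a limit along the path must have that limit equal to zero, hence $L_{\infty}^{T}R_{\infty}=0$. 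This is the precise form of the $\dot{\mathcal C}_{\infty}=0$ assertion used without detail in the earlier convergence theorems.

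The crux is the next step, and here the hypothesis $K\ne 0$ does all the work. In the Euclidean case the Euler relation $Ll=0$ put $l$ into $\ker L$, so $L_{\infty}R_{\infty}=0$ only gave $R_{\infty}=\lambda_{\infty}l_{\infty}$, and one had to assume $l_{\infty}\in\mathcal M_{l}^{*}$ to control the kernel. For a $K$-space form triangulation scaling the edge lengths no longer preserves $R$, so $l$ leaves the kernel and, by the preceding Proposition, $L$ is symmetric and \emph{nonsingular} at every nondegenerate metric; in particular $L_{\infty}$ is invertible. Consequently $L_{\infty}^{T}R_{\infty}=0$ yields $R_{\infty}=0$ outright, with no auxiliary condition on $l_{\infty}$. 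Thus every combinatorial Ricci curvature $R_{ij}$ vanishes at $l_{\infty}$, which is the asserted Ricci-flatness.

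It remains to read off the geometric conclusion. The identity $R_{ij}=2\pi-\sum_{\{i,j,k,l\}\in T}\beta_{ij,kl}=0$ says that the total dihedral angle around every edge equals $2\pi$, so the cone metric has empty singular locus and the constant-curvature tetrahedra assemble into a genuine smooth metric of constant sectional curvature $K$ on $M^{3}$; with $K<0$ this is exactly a discrete Ricci-flat hyperbolic metric. I expect two points to need care. The first is making the $\dot{\mathcal C}_{\infty}=0$ passage rigorous, i.e.\ the convergent-trajectory/convergent-integral step above. The second --- should the sign of $K$ not be fixed beforehand --- is establishing that the limit geometry is hyperbolic rather than spherical; this lies outside the flow itself and would require a combinatorial-geometric input, since only sufficiently many hyperbolic tetrahedra, with their smaller dihedral angles, can close up to total angle $2\pi$ around an edge.
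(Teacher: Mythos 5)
Your argument follows the same route as the paper's: monotonicity of $\mathcal{C}$ along the flow, vanishing of $\dot{\mathcal{C}}$ in the limit, hence $L_{\infty}^{T}R_{\infty}=0$, and then the nonsingularity of $L$ for $K\neq 0$ (the paper's Proposition proved in the appendix) forces $R_{\infty}=0$. Your integral argument justifying $\dot{\mathcal{C}}\to 0$ is a welcome tightening of the paper's terser assertion, and your closing caveat about distinguishing hyperbolic from spherical limits is fair --- the paper does not address that point either.
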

\begin{proof}
$\lim_{t\rightarrow +\infty}\mathcal{C}(t)$ exists because of $l_{\infty}$, and $\mathcal{C}(t)$ is nonincreasing along the $4^{th}$ order discrete curvature flow, so we have
$$\lim_{t\rightarrow +\infty}\dot{\mathcal{C}(t)}=0.$$
So
$$\lim_{t\rightarrow +\infty}(L^{T}R)^{T}(L^{T}R)=0.$$
Hence $L^{T}R=0$. Since $L$ is nondegenerate, $R_{\infty}=0$.
\end{proof}
\begin{remark}
We can make same conclusion for discrete curvature flow $\dot{l}=-R$ of $2^{th}$ order.
\end{remark}

\begin{theorem}
If there exists a discrete Ricci-flat metric $l_{DE}$ with $R_{DE}=0$, then the solution of $4^{th}$ order discrete curvature flow $\dot{l}=L^{T}R$ exists for all time and converges to the discrete Einstein metric $l_{DE}$ when the initial discrete Calabi energy $\mathcal{C}(0)$ is small enough.
\end{theorem}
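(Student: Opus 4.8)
The plan is to imitate the local-attractor argument used for the normalized four-order flow of Section 3, now exploiting the fact that for a $K$-space form triangulation with $K \neq 0$ the matrix $L$ is nonsingular (by the Proposition), a feature absent in the Euclidean setting where the scaling identity $Ll = 0$ forced $L$ to be singular. I would regard the flow in its gradient-descent form $\dot l = \Upsilon(l)$ on the open region $\mathcal{M}_l^*$, with $\Upsilon(l) = -L^{T}R = -\tfrac12\nabla_l\mathcal{C}$ the descent direction for the Calabi energy $\mathcal{C}=\|R\|^2$, and note first that $l_{DE}$ is an equilibrium: the hypothesis $R_{DE}=0$ gives $\Upsilon(l_{DE}) = -L_{DE}^{T}R_{DE}=0$. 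Because $L = \partial R/\partial l$ is invertible at $l_{DE}$, the inverse function theorem makes $l\mapsto R$ a local diffeomorphism there, which is exactly what lets me pass freely among the three equivalent smallness conditions ``$\mathcal{C}(0)$ small'', ``$R(0)$ close to $R_{DE}=0$'', and ``$l(0)$ close to $l_{DE}$'', as in the earlier theorems.

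The key computation is the linearization. Differentiating $\Upsilon(l)=-L^{T}R$ by the product rule produces a term $-(D_l L^{T})R$, built from third derivatives of $S_K$, together with $-L^{T}(D_l R)=-L^{T}L$. Evaluating at $l_{DE}$, the first term is annihilated by $R_{DE}=0$, so
\[
D_l\Upsilon\big|_{l=l_{DE}} = -L_{DE}^{T}L_{DE}=-L_{DE}L_{DE}^{T},
\]
using the symmetry of $L$. Since $L_{DE}$ is nonsingular, this matrix is negative definite, every eigenvalue of the linearization is strictly negative, and $l_{DE}$ is a hyperbolic sink. The same negativity is visible in the energy identity $\dot{\mathcal{C}} = -2\|L^{T}R\|^2\le 0$, so $\mathcal{C}$ is a Lyapunov function vanishing precisely on the Ricci-flat locus $\{R=0\}$.

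Next I would invoke the standard theory of asymptotically stable equilibria: a hyperbolic sink has a forward-invariant neighborhood $U\subset\mathcal{M}_l^*$ inside its basin of attraction, on which every solution exists for all time, stays in $U$ (hence never reaches the degenerate boundary), and converges to $l_{DE}$. It then remains to translate the hypothesis. Shrinking $U$ so that $l_{DE}$ is the unique Ricci-flat metric it contains and so that $l\mapsto R$ is bi-Lipschitz there, one has $\mathcal{C}(l)\asymp\|l-l_{DE}\|^2$ on $U$; choosing $\varepsilon$ small then forces any $l(0)$ with $\mathcal{C}(0)<\varepsilon$ to lie in $U$, yielding long-time existence and convergence.

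The real mathematical weight here is carried by the nonsingularity of $L$, which is established separately (Proposition, via the appendix); once that is granted, the linearization is essentially trivial because $R_{DE}=0$ kills the third-derivative term. I expect the only genuinely delicate points to be the two global issues hidden inside ``exists for all time'': confirming forward-invariance of the neighborhood so the trajectory cannot escape to the boundary of $\mathcal{M}_l^*$, and making the scalar hypothesis ``$\mathcal{C}(0)$ small'' honestly confine $l(0)$ to the basin $U$ rather than to some far-off near-flat configuration where $\mathcal{C}$ also happens to be small. Both are resolved by working entirely inside the neighborhood on which $L$ remains nonsingular and $l\mapsto R$ is a diffeomorphism, available because $\mathcal{M}_l^*$ is open and contains $l_{DE}$.
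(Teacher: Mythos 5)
Your proposal is correct and follows essentially the same route as the paper: linearize $\Upsilon(l)=-L^{T}R$ at $l_{DE}$, use $R_{DE}=0$ to kill the third-derivative term, obtain $D_l\Upsilon|_{l_{DE}}=-L_{DE}L_{DE}^{T}<0$ from the nonsingularity of $L$ in the $K\neq 0$ setting, and conclude that $l_{DE}$ is a local attractor. You supply details the paper leaves implicit (notably the inverse-function-theorem translation of ``$\mathcal{C}(0)$ small'' into ``$l(0)$ near $l_{DE}$'', a point the paper's one-line proof does not address), but the core argument is identical.
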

\begin{proof}
At point $l_{DE}$, $D_l(-L^TR)=-LL^T<0$. Hence $l_{DE}$ is a local attractor of the flow.
\end{proof}

\begin{remark}
It seams that $4^{th}$ order flow $\dot{l}=L^{T}R$ is more powerful than $2^{th}$ order flow $\dot{l}=-R$.
\end{remark}

\ac
The authors would like to thank Professor Feng Luo, Glickenstein for many helpful conversations. 

\section{Appendix}
In this section we will look at discrete Laplacians in space forms $M_K$, where subindex $K$ represents the constant sectional curvature. We conclude that the discrete Laplacian operator is nonsingular and indefinite whenever $K\neq 0$.

Consider a single tetrahedron $\tau=\{A,B,C,D\}$ embedded in $M_K$. $\tau$ varies with its six edge lengths, hence all tetrahedrons can be considered as points of some connected open set in $\mathds{R}^6_{>0}$. Denote $\beta_{AB}$ as the dihedral angle at edge $\{A,B\}$. The dihedral angles and the edge lengths are mutually determined. On one hand, six dihedral angles are determined by six edge lengths. On the other hand, each tetrahedron in the space form $M_K$ is determined, up to a motion, by its Gram matrix, which, in turn, is determined by the dihedral angles of the tetrahedron (see [Vi]). Therefore the Jacobian of dihedral angles over edges, which is denoted by
$$-L_{ABCD}\triangleq\frac{\partial (\beta_{AB},\beta_{AC},\beta_{AD},\beta_{BC},\beta_{BD},\beta_{CD})}{\partial (l_{AB},l_{AC},l_{AD},l_{BC},l_{BD},l_{CD})},$$
is nonsingular.

Next we prove that $L_{ABCD}$ is indefinite. A tetrahedron is called regular, if all lengths are equal.

\begin{proposition}
The discrete Laplacian matrix of a regular tetrahedron is
\begin{equation}
-L_{ABCD}=
\left(
 \begin{array} {cccccc}
   x & y & y & y & y & z\\
   y & x & y & y & z & y\\
   y & y & x & z & y & y\\
   y & y & z & x & y & y\\
   y & z & y & y & x & y\\
   z & y & y & y & y & x\\
  \end{array}
\right)
  \end{equation}
where $x=\frac{\partial \beta_{AB}}{\partial l_{AB}}$, $y=\frac{\partial \beta_{AB}}{\partial l_{AC}}=\frac{\partial \beta_{AB}}{\partial l_{AD}}=\frac{\partial \beta_{AB}}{\partial l_{BC}}=\frac{\partial \beta_{AB}}{\partial l_{BD}}$, $z=\frac{\partial \beta_{AB}}{\partial l_{CD}}$. Moreover, the eigenvalues of above $-L_{ABCD}$ are $x-z$, $x+z-2y$, $x+z+4y$ with degree 3, 2, 1 respectively.
\end{proposition}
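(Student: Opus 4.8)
The plan is to exploit the full symmetry of the regular tetrahedron. Write $G\cong S_4$ for the group of combinatorial symmetries acting on the four vertices $\{A,B,C,D\}$; each $g\in G$ permutes the six edges and hence acts on $\mathds{R}^6$ (the space of edge-length variations) by a permutation matrix $\rho(g)$. First I would observe that the map $l\mapsto(\beta_{AB},\dots,\beta_{CD})$ sending edge lengths to dihedral angles is $G$-equivariant, since dihedral angles and lengths are metric quantities and a vertex permutation of a regular tetrahedron is realized by an isometry of $M_K$. The regular metric $l=\{1,\dots,1\}$ is a fixed point of this $G$-action, so differentiating the equivariance relation at that point shows that the Jacobian $-L_{ABCD}$ commutes with every $\rho(g)$; that is, $-L_{ABCD}$ is a $G$-equivariant endomorphism of $\mathds{R}^6$.

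It follows that the entry $(-L_{ABCD})_{ef}$ depends only on the $G$-orbit of the ordered pair of edges $(e,f)$. There are exactly three such orbits on $E\times E$: the diagonal (an edge with itself), the pairs of edges sharing a common vertex, and the pairs of disjoint (opposite) edges. Transitivity of $G$ on each orbit yields the three constants $x$ (diagonal), $y$ (adjacent) and $z$ (opposite), and placing them according to the incidence pattern---each edge is adjacent to the four edges meeting it and opposite to exactly one---produces precisely the displayed matrix. In particular the four equalities defining $y$ amount to the statement that $G$ acts transitively on ordered pairs of adjacent edges, which one checks using the transposition fixing a given edge.

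To find the eigenvalues I would exhibit eigenvectors adapted to the three opposite-edge pairings $\{AB,CD\}$, $\{AC,BD\}$, $\{AD,BC\}$. The all-ones vector $\mathbf 1$ is fixed by $G$, and dotting it against any row gives $(-L_{ABCD})\mathbf 1=(x+4y+z)\mathbf 1$, the eigenvalue of multiplicity one. The three differences $e_{AB}-e_{CD}$, $e_{AC}-e_{BD}$, $e_{AD}-e_{BC}$ are eigenvectors with eigenvalue $x-z$: contracting a row against such a difference, the four adjacent $y$-entries cancel in pairs while the diagonal $x$ and the opposite $z$ survive with opposite signs, giving multiplicity three. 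Writing $s_1=e_{AB}+e_{CD}$, $s_2=e_{AC}+e_{BD}$, $s_3=e_{AD}+e_{BC}$, one computes $(-L_{ABCD})s_i=(x+z-2y)s_i+2y\mathbf 1$, so on the two-dimensional space spanned by $s_1-s_2$ and $s_2-s_3$, where the $\mathbf 1$ term drops out, the eigenvalue is $x+z-2y$, of multiplicity two. Since $1+3+2=6$, these exhaust the spectrum.

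The computations in the last paragraph are entirely routine, so the genuine content---and the step I expect to require the most care---is the equivariance claim underlying the matrix form: one must justify that passing from edge lengths to dihedral angles is natural with respect to isometries of $M_K$, so that differentiating at the regular point makes $-L_{ABCD}$ commute with the permutation action. A cleaner, purely algebraic alternative for the eigenvalue count is to note that the permutation module $\mathds{R}^6$ on the $2$-element subsets of a $4$-element set decomposes into three pairwise non-isomorphic $S_4$-irreducibles of dimensions $1,2,3$, each occurring with multiplicity one; by Schur's lemma any equivariant operator acts as a scalar on each summand, which forces exactly three eigenvalues with multiplicities $1,2,3$ and reduces the problem to evaluating $-L_{ABCD}$ on a single vector from each summand.
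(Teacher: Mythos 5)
Your argument is correct and complete. Note that the paper itself offers no proof of this proposition at all: it states the matrix form and the eigenvalues without justification and moves directly to the lemmas computing $x$, $y$, $z$, so there is no official route to compare against. Your two steps supply exactly what the paper takes for granted. The equivariance claim is sound: for any edge-length vector $l$ (not just the regular one), relabelling the vertices by $g\in S_4$ produces a congruent tetrahedron in $M_K$ with permuted lengths and permuted dihedral angles, so $\beta_{g(e)}(\rho(g)l)=\beta_e(l)$; differentiating at the regular point, which is fixed by $\rho(g)$, gives $\rho(g)^{-1}(-L_{ABCD})\rho(g)=-L_{ABCD}$, and the three orbits of ordered edge pairs (equal, sharing a vertex, opposite) yield the three constants and the displayed incidence pattern. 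The eigenvector computations check out: with edges ordered $AB,AC,AD,BC,BD,CD$ one has $(-L_{ABCD})\mathbf 1=(x+4y+z)\mathbf 1$; each $e_{AB}-e_{CD}$ is killed in the four adjacent slots and picks up $x-z$ on the opposite pair, giving multiplicity $3$; and since $s_1+s_2+s_3=\mathbf 1$, the identity $(-L_{ABCD})s_i=(x+z-2y)s_i+2y\,\mathbf 1$ gives eigenvalue $x+z-2y$ on the two-dimensional complement of $\mathbf 1$ inside $\operatorname{span}(s_1,s_2,s_3)$, and $1+3+2=6$ exhausts the spectrum. Your closing remark via the multiplicity-free decomposition of the permutation module on $2$-subsets of a $4$-set and Schur's lemma is a clean conceptual shortcut that also explains \emph{why} there are exactly three eigenvalues with these multiplicities, independent of the particular values of $x,y,z$. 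One small caveat: by Schur's lemma alone a self-map of the $2$-dimensional or $3$-dimensional isotypic component could a priori fail to be diagonalizable if the representation were not multiplicity-free, so you are right to emphasize that the three irreducibles are pairwise non-isomorphic and each occurs once; with that in place the argument is airtight.
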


\begin{figure}
\begin{center}
\includegraphics[width=4in,height=1.5in]{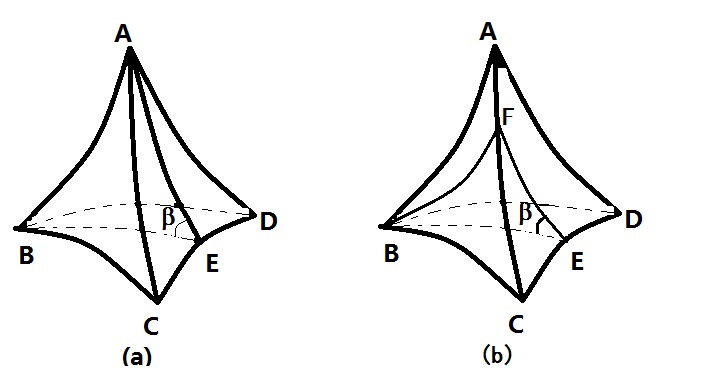}
\caption{}
\label{fig1}
\end{center}
\end{figure}

In the following, we claim that, when $K\neq 0$, the discrete Laplacians are nonsingular but not definite. It's enough to determine the signal of
$x-z$, $x+z-2y$ and $x+z+4y$.

First, we recall the formula of cosine law in the two-dimensional space forms $M^{2}(K)$ with constant sectional curvature $K$. Denote
\begin{eqnarray*}
S_{K}(t)=
\begin{cases}
\frac{\sin(\sqrt{K}t)}{\sqrt{K}},&K>0 \cr
t, &K=0 \cr
\frac{\sinh(\sqrt{-K}t)}{\sqrt{-K}},&K<0
\end{cases}
.
\end{eqnarray*}
\begin{eqnarray*}
C_{K}(t)=
\begin{cases}
\cos(\sqrt{K}t),&K>0 \cr
1, &K=0 \cr
\cosh(\sqrt{-K}t),&K<0
\end{cases}
.
\end{eqnarray*}
\begin{eqnarray*}
f_{K}(r)=\int_{0}^{r}S_{K}(t)dt=
\begin{cases}
\frac{1}{K}(1-C_{K}(r)), &K\neq 0 \cr
\frac{r^{2}}{2}, &K=0
\end{cases}.
\end{eqnarray*}
Then we have the following identity.
\begin{enumerate}
\item $f_{K}^{'}(r)=S_{K}(r), \quad S_{K}^{'}(r)=C_{K}(r);$
\item $KS_{K}^{2}(a)+C_{K}^{2}(a)=1;$
\item $S_{K}(a+b)=S_{K}(a)S_{K}(b)+C_{K}(a)C_{K}(b);$
\item $C_{K}(a+b)=C_{K}(a)C_{K}(b)-KS_{K}(a)S_{K}(b);$
\item $C_{K}(2a)=2C_{K}^{2}(a)-1=1-2KS_{K}^{2}(a).$
\end{enumerate}
So
\begin{equation}
f_{K}(r)=2S_{K}^{2}(r/2).
\end{equation}
\begin{proposition}[The Cosine law]
A geodesic triangle $\triangle ABC$ in the space form $M^{2}(K)$, with side lengths $a,b,c$,  is opposite angle $A,B,C$ respectively. Then the Cosine law is
\begin{equation}
f_{K}(c)=f_{K}(a-b)+S_{K}(a)S_{K}(b)(1-\cos C).
\end{equation}
For $K\neq 0$, the above formula is equivalent to
\begin{equation}
C_{K}(c)=C_{K}(a)C_{K}(b)+KS_{K}(a)S_{K}(b)\cos C.
\end{equation}
\end{proposition}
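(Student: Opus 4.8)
The plan is to prove the second displayed identity $C_K(c)=C_K(a)C_K(b)+KS_K(a)S_K(b)\cos C$ for $K\neq 0$ by a single model-space computation that covers both signs of $K$ at once, to deduce the first identity from it by elementary algebra using the identities (1)--(5), and to treat $K=0$ as a direct verification of the first identity. The hard part, as I explain below, will not be the algebra but the sign bookkeeping that makes the spherical and hyperbolic computations collapse into one formula.

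First I would set up a uniform model. Realize $M^2(K)$ as the quadric $Q_K=\{x\in\mathds{R}^3:\langle x,x\rangle=1/K\}$, where $\langle\cdot,\cdot\rangle$ is the Euclidean form $\mathrm{diag}(1,1,1)$ when $K>0$ (so $Q_K$ is the round sphere of radius $1/\sqrt{K}$) and the Lorentzian form $\mathrm{diag}(1,1,-1)$ when $K<0$ (so $Q_K$ is the upper sheet of a two-sheeted hyperboloid). Two facts about this model are all I need: (i) for $P,Q\in Q_K$ at geodesic distance $d$ one has $C_K(d)=K\langle P,Q\rangle$; and (ii) the unit-speed geodesic leaving a point $p$ with unit initial velocity $u$, so $\langle p,p\rangle=1/K$, $\langle u,u\rangle=1$, $\langle p,u\rangle=0$, is $\gamma(t)=C_K(t)\,p+S_K(t)\,u$. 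Both reduce to identity (2): for instance $\langle\gamma,\gamma\rangle=\tfrac1K C_K^2+S_K^2=\tfrac1K(C_K^2+KS_K^2)=\tfrac1K$, and differentiating, using identity (1), gives constant unit speed $\langle\gamma',\gamma'\rangle=KS_K^2+C_K^2=1$.

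With these in hand the cosine law is a one-line inner-product expansion. Place the vertex $C$ on $Q_K$ and write $A=C_K(b)\,C+S_K(b)\,u_A$ and $B=C_K(a)\,C+S_K(a)\,u_B$, where $u_A,u_B$ are the unit tangents of the sides $CA,CB$ at $C$, so $\langle u_A,u_B\rangle=\cos C$. Since $\langle C,u_A\rangle=\langle C,u_B\rangle=0$ and $\langle C,C\rangle=1/K$, expanding gives $\langle A,B\rangle=\tfrac1K C_K(a)C_K(b)+S_K(a)S_K(b)\cos C$, and multiplying by $K$ together with $C_K(c)=K\langle A,B\rangle$ yields the second formula. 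This derivation is uniform in the sign of $K$, and the single factor $K$ in front of $S_K(a)S_K(b)$ is exactly what produces the $+\sin\sin$ of spherical trigonometry and the $-\sinh\sinh$ of hyperbolic trigonometry after the $S_K$ normalization.

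To pass to the first formula (still $K\neq 0$) I would use $f_K(r)=\tfrac1K(1-C_K(r))$ and the subtraction identity $C_K(a-b)=C_K(a)C_K(b)+KS_K(a)S_K(b)$, which follows from identity (4) on replacing $b$ by $-b$ and using that $C_K$ is even and $S_K$ is odd. Subtracting the two expressions and dividing by $K$ gives $f_K(c)-f_K(a-b)=S_K(a)S_K(b)(1-\cos C)$, which is the first formula; the case $K=0$ is recovered by direct substitution $f_0(r)=r^2/2$, $S_0(t)=t$, giving $c^2=a^2+b^2-2ab\cos C$, or as the $K\to 0$ limit. The step I expect to be the genuine obstacle is fact (i) in the Lorentzian case: verifying $C_K(d)=K\langle P,Q\rangle$ with the correct signs on the upper hyperboloid sheet, where $\langle P,Q\rangle<0$ and $K<0$, and confirming that $\gamma$ in (ii) is the geodesic for the induced metric rather than merely a curve lying on $Q_K$. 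Both again reduce to identity (2), but the signs must be tracked carefully so that the two geometries genuinely collapse onto the single displayed identity.
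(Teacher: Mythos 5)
The paper does not actually prove this proposition: it is stated as a recollection of the classical unified cosine law for $M^2(K)$ and then used in the subsequent lemmas, so there is no in-paper argument to match yours against. Your quadric-model derivation is correct and self-contained: the two facts $C_K(d)=K\langle P,Q\rangle$ and $\gamma(t)=C_K(t)p+S_K(t)u$ check out in both signatures (for the geodesic claim the one step you flag as remaining is genuinely routine --- $\gamma''=-K\gamma$ is proportional to the position vector, hence normal to $Q_K$, so unit speed plus normal acceleration makes $\gamma$ a geodesic of the induced metric), the inner-product expansion gives the second displayed identity, and the passage to the first identity via $f_K(r)=\tfrac1K(1-C_K(r))$ and $C_K(a-b)=C_K(a)C_K(b)+KS_K(a)S_K(b)$ is exactly the "equivalence for $K\neq 0$" the proposition asserts, with $K=0$ handled by direct substitution. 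One small point worth noting: you correctly route the subtraction formula through the paper's identity (4) rather than its identity (3), which as printed ($S_K(a+b)=S_K(a)S_K(b)+C_K(a)C_K(b)$) is a typo for the addition formula $S_K(a)C_K(b)+C_K(a)S_K(b)$ and would not even be dimensionally consistent with (2); your argument is unaffected. What your approach buys over simply citing the spherical and hyperbolic laws of cosines separately is a genuinely uniform proof in which the factor $K$ in front of $S_K(a)S_K(b)\cos C$ emerges from $\langle C,C\rangle=1/K$ rather than from a case split, which is in the same spirit as the paper's use of $S_K$, $C_K$, $f_K$ to unify the three geometries.
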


Now we calculate the exact value of $a, b, c$, we have the following results.
\begin{lemma}
\begin{equation}
z= \frac{\sqrt{2}C_{K}^{2}(l_{0}/2)}{S_{K}(l_{0}/2)\sqrt{1+3C_{K}(l_{0})}}.
\end{equation}
\end{lemma}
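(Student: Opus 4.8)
The plan is to express $z=\partial\beta_{AB}/\partial l_{CD}$ through the \emph{vertex figure} (spherical link) at the vertex $A$, and then reduce everything to one- and two-dimensional cosine laws. Recall that the three edges $AB,AC,AD$ issuing from $A$ determine three unit directions in the tangent space $T_AM_K$; these are the vertices of a spherical triangle on the unit sphere of directions whose side lengths are the face angles $\angle BAC,\angle BAD,\angle CAD$ and whose angles are precisely the dihedral angles $\beta_{AB},\beta_{AC},\beta_{AD}$. The key observation is that among the three sides only $\angle CAD$ involves $l_{CD}$: by the planar cosine law the face angle $\angle BAC$ depends only on $l_{AB},l_{AC},l_{BC}$ and $\angle BAD$ only on $l_{AB},l_{AD},l_{BD}$, so both are held fixed when we differentiate with respect to $l_{CD}$. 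Hence by the chain rule
$$z=\frac{\partial\beta_{AB}}{\partial(\angle CAD)}\cdot\frac{\partial(\angle CAD)}{\partial l_{CD}}.$$

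First I would compute the second factor. Applying the Cosine law (the $K\neq0$ form $C_K(c)=C_K(a)C_K(b)+KS_K(a)S_K(b)\cos C$) in the geodesic triangle $ACD$, with the angle $\angle CAD$ opposite the side $CD$, and differentiating in $l_{CD}$ using $C_K'=-KS_K$ and $S_K'=C_K$, gives
$$\frac{\partial(\angle CAD)}{\partial l_{CD}}=\frac{S_K(l_{CD})}{S_K(l_{AC})S_K(l_{AD})\sin(\angle CAD)}.$$
For the first factor I would use the \emph{spherical} cosine law in the vertex figure, $\cos(\angle CAD)=\cos(\angle BAC)\cos(\angle BAD)+\sin(\angle BAC)\sin(\angle BAD)\cos\beta_{AB}$, and differentiate with respect to $\angle CAD$ holding $\angle BAC,\angle BAD$ fixed, which yields
$$\frac{\partial\beta_{AB}}{\partial(\angle CAD)}=\frac{\sin(\angle CAD)}{\sin(\angle BAC)\sin(\angle BAD)\sin\beta_{AB}}.$$
Multiplying the two factors, the $\sin(\angle CAD)$ cancels and, after specializing to the regular configuration where all edges equal $l_0$, all face angles equal a common value $\alpha$ and all dihedral angles equal $\beta$, I obtain
$$z=\frac{1}{S_K(l_0)\,\sin^2\alpha\,\sin\beta}.$$

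It then remains to evaluate $\alpha$ and $\beta$ explicitly. Applying the Cosine law to the equilateral $K$-triangle $ACD$ and using the identity $KS_K^2(l_0)=1-C_K^2(l_0)$ gives $\cos\alpha=C_K(l_0)/(1+C_K(l_0))$; feeding this into the equilateral spherical vertex figure (equal sides $\alpha$, equal angle $\beta$) gives $\cos\beta=\cos\alpha/(1+\cos\alpha)=C_K(l_0)/(1+2C_K(l_0))$. From these one computes $\sin^2\alpha=(1+2C_K(l_0))/(1+C_K(l_0))^2$ and $\sin^2\beta=(1+C_K(l_0))(1+3C_K(l_0))/(1+2C_K(l_0))^2$, so that $\sin^2\alpha\,\sin\beta=\sqrt{1+3C_K(l_0)}/(1+C_K(l_0))^{3/2}$. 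Substituting the half-angle identities $1+C_K(l_0)=2C_K^2(l_0/2)$ and $S_K(l_0)=2S_K(l_0/2)C_K(l_0/2)$ then collapses the expression to the claimed
$$z=\frac{\sqrt2\,C_K^2(l_0/2)}{S_K(l_0/2)\sqrt{1+3C_K(l_0)}}.$$

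The main obstacle I anticipate is not the algebra but justifying the vertex-figure reduction cleanly: one must verify that the dihedral angle $\beta_{AB}$ really is the angle at the $AB$-vertex of the spherical link, that its two neighbouring sides are exactly the two face angles not containing $CD$, and therefore that varying $l_{CD}$ alone moves only the side of that spherical triangle opposite to the $AB$-vertex. Once this localization is secured, the remaining steps are just two cosine-law differentiations and bookkeeping with the $S_K,C_K$ identities listed above.
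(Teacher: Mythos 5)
Your proof is correct, and it takes a genuinely different route from the paper's. The paper computes the symmetric entry $\partial\beta_{CD}/\partial l_{AB}$ by passing to the midpoint $E$ of $CD$: since the faces $ACD$ and $BCD$ are isosceles, the dihedral angle along $CD$ equals $\angle AEB$, and a single application of the cosine law in the isosceles triangle $AEB$ (with legs the altitude $h_0$) gives $f_K(l_{AB})=S_K^2(h_0)(1-\cos\beta)$, whence $z=S_K(l_0)/(S_K^2(h_0)\sin\beta)$; the rest is the same evaluation of $\cos\beta=C_K(l_0)/(1+2C_K(l_0))$ and $S_K^2(h_0)$ that you carry out. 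You instead localize at the vertex $A$ via the spherical link, which is a standard and perfectly rigorous device in any space form (the link is the unit tangent sphere at $A$, its sides are the face angles at $A$ and its angles the dihedral angles along the edges through $A$); the worry you flag about justifying it is not a real obstacle. Your decomposition buys more than the paper's: the product of the two cosine-law derivatives yields the general off-diagonal formula
\begin{equation*}
\frac{\partial\beta_{AB}}{\partial l_{CD}}=\frac{S_K(l_{CD})}{S_K(l_{AC})\,S_K(l_{AD})\,\sin(\angle BAC)\,\sin(\angle BAD)\,\sin\beta_{AB}},
\end{equation*}
valid at an arbitrary nondegenerate tetrahedron (and manifestly positive there), whereas the paper's midpoint construction exploits the isosceles symmetry and only produces the value at the regular point. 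The paper's argument is shorter for the purpose at hand, since only the regular value is needed for the eigenvalue computation. Your intermediate quantities $\cos\alpha=C_K(l_0)/(1+C_K(l_0))$, $\cos\beta=C_K(l_0)/(1+2C_K(l_0))$ and $\sin\beta=\sqrt{(1+C_K(l_0))(1+3C_K(l_0))}/(1+2C_K(l_0))$ agree with the paper's (4.9) and (4.10), and the final half-angle reduction checks out.
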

\begin{proof}
As the definition of $ L_{16}$, we just calculate $ \frac{\partial\beta}{\partial l_{6} }$. For calculate it, we only assume the length of $AB$ is $l_{6}$ and other edges have length $l_{0}$ in the hyperbolic tetrahedron in Figure 1(a). As shown in the Figure 1(a), $\mathrm{E}$ is the midpoint of the edge $\mathrm{CD}$, the dihedral angle at the edge $\mathrm{CD}$ is the angle $\angle\mathrm{AEB}$, i.e., $\beta$.

Using the cosine law in the triangle $\triangle\mathrm{AEB}$, we have
$$f_{K}(l_{6})=f_{K}(0)+S_{K}^{2}(h_{0})(1-\cos\beta_{1})=S_{K}^{2}(h_{0})(1-\cos\beta_{1}),$$
here $h_{0}$ is the length of the altitude in the regular triangle with side length $l_{0}$,
we can get
\begin{equation}
\frac{\partial\beta_{1}}{\partial l_{6} }= \frac{f_{K}^{'}(l_{6})}{S_{K}^{2}(h_{0})\sin \beta }=\frac{S_{K}(l_{6})}{S_{K}^{2}(h_{0})\sin \beta }.
\end{equation}
So at the regular point
$$z=\frac{S_{K}(l_{0})}{S_{K}^{2}(h_{0})\sin \beta }.$$
At the regular point,
$$f_{K}(l_{0})=f_{K}(h_{0}-\frac{l_{0}}{2})+S_{K}(h_{0})S_{K}(\frac{l_{0}}{2}),$$
then we have
$$C_{K}(h_{0})=\frac{C_{K}(l_{0})}{C_{K}(\frac{l_{0}}{2})}.$$
So we obtain
\begin{eqnarray*}
S_{K}^{2}(h_{0})=
\begin{cases}
\frac{1-C_{K}^{2}(h_{0})}{K}, &K\neq 0 \cr
h_{0}^{2}, & K=0
\end{cases}.
\end{eqnarray*}
For $K\neq 0$,
\begin{equation}
S_{K}^{2}(h_{0})=\frac{C_{K}^{2}(l_{0}/2)-C_{K}^{2}(l_{0})}{KC_{K}^{2}(l_{0}/2)}=\frac{S_{K}^{2}(l_{0}/2)(1+2C_{K}(l_{0}))}{C_{K}^{2}(l_{0}/2)},
\end{equation}
the above formula also holds for the case of $K=0$.
By the cosine law
\begin{equation}
\cos\beta=\frac{S_{K}^{2}(h_{0})-f_{K}(l_{0})}{S_{K}^{2}(h_{0})},
\end{equation}
If $K=0$, it is easy to get $\cos\beta=1-\frac{l_{0}^{2}}{2h_{0}^{2}}=1/3$. For the case of $K\neq 0$,
\begin{equation}\label{cosbeta}
\begin{aligned}
\cos\beta&=\frac{\frac{1-C_{K}^{2}(h_{0})}{K}-\frac{1-C_{K}(l_{0})}{K}}{\frac{1-C_{K}^{2}(h_{0})}{K}}
=\frac{C_{K}(l_{0})-C_{K}^{2}(h_{0})}{1-C_{K}^{2}(h_{0})}\\
&=\frac{C_{K}(l_{0})-\frac{C_{K}^{2}(l_{0})}{C_{K}^{2}(\frac{l_{0}}{2})}}{1-\frac{C_{K}^{2}(l_{0})}{C_{K}^{2}(\frac{l_{0}}{2})}}
=\frac{C_{K}(l_{0})(C_{K}^{2}(\frac{l_{0}}{2})-C_{K}(l_{0}))}{C_{K}^{2}(\frac{l_{0}}{2})-C_{K}^{2}(l_{0})}\\
&=\frac{KC_{K}(l_{0})S_{K}^{2}(\frac{l_{0}}{2})}{C_{K}^{2}(\frac{l_{0}}{2})-C_{K}^{2}(l_{0})}
=\frac{KC_{K}(l_{0})S_{K}^{2}(\frac{l_{0}}{2})}{\frac{1+C_{K}(l_{0}-2C_{K}^{2}(l_{0})}{2}}\\
&=\frac{KC_{K}(l_{0})S_{K}^{2}(\frac{l_{0}}{2})}{(1+2C_{K}(l_{0}))(\frac{1-C_{K}(l_{0})}{2})}
=\frac{KC_{K}(l_{0})S_{K}^{2}(\frac{l_{0}}{2})}{(1+2C_{K}(l_{0}))KS_{K}^{2}(l_{0}/2)}\\
&=\frac{C_{K}(l_{0})}{1+2C_{K}(l_{0})}.
\end{aligned}
\end{equation}
The above formula also holds for $K=0$.
Then we have
\begin{equation}\label{sinbeta}
\sin\beta=\frac{\sqrt{(1+C_{K}(l_{0}))(1+3C_{K}(l_{0}))}}{1+2C_{K}(l_{0})}=\frac{\sqrt{2}C_{K}(l_{0}/2)\sqrt{1+3C_{K}(l_{0})}}{1+2C_{K}(l_{0})}
\end{equation}
Hence
\begin{equation}
z=\frac{S_{K}(l_{0})C_{K}(l_{0}/2)}{\sqrt{2}S_{K}^{2}(l_{0}/2)\sqrt{1+3C_{K}(l_{0})}}=\frac{\sqrt{2}C_{K}^{2}(l_{0}/2)}{S_{K}(l_{0}/2)\sqrt{1+3C_{K}(l_{0})}}
\end{equation}
\end{proof}

\begin{lemma}

\begin{equation}
x=\frac{\sqrt{2}C_{K}^{2}(l_{0})}{S_{K}(l_{0}/2)\sqrt{1+3C_{K}(l_{0})}(1+2C_{K}(l_{0}))}.
\end{equation}
\end{lemma}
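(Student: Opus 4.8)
The plan is to run the same isoceles-triangle argument that produced $z$, but now with the edge $AB$ carrying the variable length while the remaining five edges stay at $l_0$, and then to differentiate the dihedral angle at $AB$ with respect to its own length. So let $AB$ have length $l$ and let $AC=AD=BC=BD=CD=l_0$. Since $\triangle ABC$ is isoceles with $AC=BC=l_0$, the foot of the perpendicular from $C$ to $AB$ is the midpoint $M$ of $AB$; likewise the foot of the perpendicular from $D$ to $AB$ is the same point $M$. Hence the dihedral angle $\beta_{AB}$ at the edge $AB$ is exactly the angle $\angle CMD$ in the geodesic triangle $\triangle CMD$, whose two equal sides $MC=MD=:p$ are the altitudes of $\triangle ABC$ and $\triangle ABD$ and whose third side is $CD=l_0$.

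Next I would record two relations. The cosine law applied to the right geodesic triangle $\triangle AMC$ (right angle at $M$, with $AM=l/2$ and $AC=l_0$) gives $C_K(p)=C_K(l_0)/C_K(l/2)$, the exact analogue of the identity for $h_0$ obtained in the computation of $z$; at $l=l_0$ this returns $p=h_0$. The cosine law applied to $\triangle CMD$ gives $f_K(l_0)=S_K^2(p)\bigl(1-\cos\beta_{AB}\bigr)$, i.e. $\cos\beta_{AB}=1-f_K(l_0)/S_K^2(p)$.

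Then the computation is pure chain rule. Differentiating $\cos\beta_{AB}=1-f_K(l_0)S_K^{-2}(p)$ in $l$ (with $f_K(l_0)$ constant) expresses $x=\partial\beta_{AB}/\partial l$ through $\partial p/\partial l$ and $\sin\beta_{AB}$, while differentiating $C_K(p)=C_K(l_0)/C_K(l/2)$ and using $C_K'=-KS_K$ supplies $\partial p/\partial l$. Evaluating at the regular point $l=l_0$, $p=h_0$, where $\beta_{AB}=\beta$, and inserting the values already available, namely $C_K(h_0)=C_K(l_0)/C_K(l_0/2)$, $S_K^2(h_0)=S_K^2(l_0/2)(1+2C_K(l_0))/C_K^2(l_0/2)$, $f_K(l_0)=2S_K^2(l_0/2)$, and $\sin\beta$ from \eqref{sinbeta}, the expression collapses to the asserted formula for $x$; the same algebra also covers the degenerate case $K=0$.

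The only real obstacle is the geometric reduction in the first paragraph: one must notice that the isoceles symmetry $AC=BC$, $AD=BD$ forces both perpendicular feet to coincide at the single midpoint $M$, so that $\angle CMD$ genuinely realizes the dihedral angle at $AB$ and the problem becomes one-dimensional in $l$. After that, everything is the same differentiate-and-simplify routine already carried out for $z$, and I expect no new difficulty beyond bookkeeping.
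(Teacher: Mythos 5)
Your argument is essentially the paper's own proof up to a relabeling of vertices: the paper varies the edge $CD$ and realizes the dihedral angle at $CD$ as $\angle AEB$ at the midpoint $E$ of $CD$ (the common foot of the perpendiculars from $A$ and $B$), which is exactly your midpoint--altitude reduction for $AB$, and it then uses the same two cosine-law relations $C_K(h)=C_K(l_0)/C_K(l_1/2)$ and $f_K(l_0)=S_K^2(h)(1-\cos\beta)$ followed by the same chain-rule evaluation at the regular point using $f_K(l_0)=2S_K^2(l_0/2)$ and the known value of $\sin\beta$. Your computation is correct and collapses to the stated formula just as the paper's does.
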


\begin{proof}
For calculate it, we only assume the length of $\mathrm{CD}$ is $l_{1}$ and other edges have length $l_{0}$ in the tetrahedron in Figure 1(a). As shown in the Figure 1(a), $\mathrm{E}$ is the midpoint of the edge $\mathrm{CD}$, the dihedral angle at the edge $\mathrm{CD}$ is the angle $\angle\mathrm{AEB}$, i.e., $\beta$. We assume the length of $\mathrm{AE}$ is $h$. By the cosine law,
$$f_{K}(l_{0})=f_{K}(0)+S_{K}^{2}(h)(1-\cos\beta)=S_{K}^{2}(h)(1-\cos\beta),$$
we get
\begin{equation}
-\frac{\partial \beta}{\partial l_{1}}=\frac{1-\cos\beta}{S_{K}^{2}(h)\sin\beta}\frac{\partial S_{K}^{2}(h)}{\partial l_{1}}.
\end{equation}
By the cosine law
$$f_{K}(l_{0})=f_{K}(l_{1}/2-h)+S_{K}(h)S_{K}(l_{1}/2),$$
we have
\begin{equation*}
C_{K}(h)=\frac{C_{K}(l_{0})}{C_{K}(l_{1}/2)}.
\end{equation*}
Hence
\begin{equation*}
\frac{\partial C_{K}(h)}{\partial l_{1}}=-\frac{C_{K}(l_{0})C'_{K}(l_{1}/2)}{2C_{K}^{2}(l_{1}/2)}=\frac{KS_{K}(l_{1}/2)C_{K}(l_{0})}{2C_{K}^{2}(l_{1}/2)}.
\end{equation*}
And
\begin{eqnarray}
S_{K}^{2}(h)=
\begin{cases}
\frac{1-C_{K}^{2}(h)}{K}, &K\neq 0\cr
h^{2}, &K=0
\end{cases},
\end{eqnarray}
we have
\begin{eqnarray*}
\frac{\partial S_{K}^{2}(h)}{\partial l_{1}}=
\begin{cases}
\frac{-2C_{K}(h)}{K}\frac{KS_{K}(l_{1}/2)C_{K}(l_{0})}{2C_{K}^{2}(l_{1}/2)}=-\frac{C_{K}^{2}(l_{0})S_{K}(l_{1}/2)}{C_{K}^{3}(l_{1}/2)}, &K\neq 0\cr
-l_{1}/2, &K=0
\end{cases}
\end{eqnarray*}
So we obtain
\begin{equation}
\frac{\partial S_{K}^{2}(h)}{\partial l_{1}}=-\frac{C_{K}^{2}(l_{0})S_{K}(l_{1}/2)}{C_{K}^{3}(l_{1}/2)}.
\end{equation}
At the regular point, we have
\begin{equation}
\begin{aligned}
x&=\frac{\partial\beta}{\partial l_{1}}=\frac{C_{K}^{2}(l_{0})S_{K}(l_{1}/2)}{C_{K}^{3}(l_{1}/2)}\frac{C_{K}(l_{0}/2)(1+C_{K}(l_{0}))}{\sqrt{2}S_{K}^{2}(l_{0}/2)\sqrt{1+3C_{K}(l_{0})}(1+2C_{K}(l_{0}))}\\
&=\frac{\sqrt{2}C_{K}^{2}(l_{0})}{S_{K}(l_{0}/2)\sqrt{1+3C_{K}(l_{0})}(1+2C_{K}(l_{0}))}.
\end{aligned}
\end{equation}

\end{proof}

\begin{lemma}
\begin{equation}
y= -\frac{\sqrt{2}C_{K}(l_{0})C_{K}^{2}(l_{0}/2)}{S_{K}(l_{0}/2)(1+2C_{K}(l_{0}))\sqrt{1+3C_{K}(l_{0})}}.
\end{equation}
\end{lemma}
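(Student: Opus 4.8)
The plan is to avoid computing $y=\partial\beta_{AB}/\partial l_{AC}$ by direct triangle geometry. Varying an adjacent edge destroys the symmetry that placed the foot of the altitude at the midpoint $E$; one would then have to track two distinct perpendicular feet on the opposite edge, which is exactly the asymmetric configuration that makes the $z$ and $x$ computations so clean when it is absent. Instead I would exploit the fact that $x$, $y$, $z$ are already linked by the behavior of $\beta$ along the one-parameter family of regular tetrahedra.

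Concretely, in the row of $-L_{ABCD}$ indexed by the edge whose dihedral angle is $\beta$, there is exactly one diagonal entry $x$, four adjacent entries $y$, and one opposite entry $z$, so the row sum is $x+4y+z$. On the other hand, if all six edges are set equal to $l_0$ and $l_0$ alone is varied, the chain rule gives $\frac{d\beta}{dl_0}=\sum_{e}\frac{\partial\beta}{\partial l_e}=x+4y+z$, where $\beta=\beta(l_0)$ is now the dihedral angle of the regular tetrahedron viewed as a function of the common edge length. This identity is the crux: it pins down $y$ once $x$, $z$, and $\frac{d\beta}{dl_0}$ are known, and it replaces the hard adjacent-edge derivative by a single scalar derivative.

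The quantity $\frac{d\beta}{dl_0}$ is immediate from the closed form already established in (\ref{cosbeta}), namely $\cos\beta=\frac{C_K(l_0)}{1+2C_K(l_0)}$. Differentiating and using $C_K'=-KS_K$ gives $-\sin\beta\,\frac{d\beta}{dl_0}=\frac{-KS_K(l_0)}{(1+2C_K(l_0))^2}$; substituting the value of $\sin\beta$ from (\ref{sinbeta}) together with the double-angle relations $S_K(l_0)=2S_K(l_0/2)C_K(l_0/2)$ and $KS_K^2(l_0/2)=1-C_K^2(l_0/2)$ produces a clean expression for $\frac{d\beta}{dl_0}$ over the common denominator $D=S_K(l_0/2)\sqrt{1+3C_K(l_0)}(1+2C_K(l_0))$.

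Finally I would solve $4y=\frac{d\beta}{dl_0}-x-z$ using the formulas for $x$ and $z$ from the two preceding lemmas. Writing all three terms over $D$ and substituting $c=C_K(l_0/2)$ with $C_K(l_0)=2c^2-1$, the numerators collapse via the elementary identity $(1-c^2)-(2c^2-1)^2-c^2(4c^2-1)=4c^2(1-2c^2)$; since $1-2c^2=-C_K(l_0)$, this reproduces exactly the claimed formula for $y$. The only genuine labor is this last algebraic simplification, which is routine; the real content is the reduction of the adjacent-edge derivative to the scalar derivative $\frac{d\beta}{dl_0}$ along the regular family, which sidesteps the asymmetric geometry entirely.
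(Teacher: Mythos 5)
Your derivation is correct, but it is a genuinely different route from the paper's. The paper computes $y=\partial\beta/\partial l_{AD}$ head-on: it perturbs one adjacent edge to length $l_2$, introduces the foot $F$ with auxiliary lengths $s=|AF|$ and $\tilde h=|FE|$, writes three cosine-law relations for the triangles $\triangle CEF$, $\triangle AFD$, $\triangle ABF\cup\triangle BEF$, and differentiates all three at the regular point to extract $d\beta/dl_2$. You instead observe that the row sum $x+4y+z$ of $-L_{ABCD}$ equals $d\beta/dl_0$ along the regular family, compute that scalar derivative from $\cos\beta=\frac{C_K(l_0)}{1+2C_K(l_0)}$ (already available from the $z$-lemma), and solve for $y$ using the previously established $x$ and $z$. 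I checked the key steps: the chain-rule identity is legitimate (the four adjacent entries are equal by the symmetry of the regular tetrahedron, as the Proposition already asserts), your value $\frac{d\beta}{dl_0}=\frac{\sqrt2\,KS_K(l_0/2)}{(1+2C_K(l_0))\sqrt{1+3C_K(l_0)}}$ agrees with the paper's listed $x+z+4y$, and the polynomial identity in $c=C_K(l_0/2)$ does collapse to $4c^2(1-2c^2)=-4c^2C_K(l_0)$, reproducing the stated $y$. What your route buys is brevity and a conceptual dividend: it explains why the eigenvalue $x+z+4y$ is proportional to $K$ (it is the derivative of $\beta$ under uniform scaling, which vanishes exactly in the scale-invariant Euclidean case, i.e.\ the Euler identity $Ll=0$). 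What it gives up is independence: your $y$ inherits any error in the $x$- and $z$-lemmas, whereas the paper's direct computation of $y$ serves as a cross-check on all three entries; indeed, in the paper the identity you take as your starting point appears only afterwards, as a consequence of the three separately computed values.
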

\begin{proof}
For calculate it, we only assume the length of $\mathrm{AD}$ is $l_{2}$ and other edges have length $l_{0}$ in the tetrahedron in Figure 1(b). As shown in the Figure 1(b), $\mathrm{E}$ is the midpoint of the edge $\mathrm{CD}$, the dihedral angle at the edge $\mathrm{CD}$ is the angle $\angle\mathrm{FEB}$, i.e., $\beta$. For simplicity, we assume $l_{2}\leq 1_{0}$. Assume the length of $\mathrm{AF}$ is $s$, the length of $\mathrm{FE}$ is $\tilde{h}$. So the length of $\mathrm{FC}$ and $\mathrm{FD}$ are equal, is $l_{0}-s$. By the cosine law in the triangle $\triangle\mathrm{CEF}$,
\begin{equation}
f_{K}(l_{0}-s)=f_{K}(\tilde{h}-l_{0}/2)+S_{K}(\tilde{h})S_{K}(l_{0}/2),
\end{equation}
by the cosine law in the triangle $\triangle\mathrm{AFD}$,
\begin{equation}
f_{K}(l_{0}-s)=f_{K}(l_{2}-s)+\frac{S_{K}(s)}{S_{K}(l_{0})}(f_{K}(l_{0})-f_{K}(l_{2}-l_{0})),
\end{equation}
by the cosine law in the triangle $\triangle\mathrm{ABF}$ and $\triangle\mathrm{BEF}$,
\begin{equation}
f_{K}(l_{0}-s)+\frac{f_{K}(l_{0})S_{K}(s)}{S_{K}(l_{0})}=f_{K}(h_{0}-\tilde{h})+S_{K}(h_{0})S_{K}(\tilde{h})(1-\cos\beta).
\end{equation}

Differentiate the above three equations at the regular point,i.e., $ s=0, l_{2}=1_{0},\tilde{h}= h_{0} $. We get,
\begin{equation}
-S_{K}(l_{0})ds=(S_{K}(h_{0}-l_{0}/2)+C_{K}(h_{0})S_{K}(l_{0}/2))d\tilde{h}=S_{K}(h_{0})C_{K}(l_{0}/2)d\tilde{h},
\end{equation}
\begin{equation}
-S_{K}(l_{0})ds=-S_{K}(l_{0})ds+S_{K}(l_{0})dl_{2}+\frac{C_{K}(0)}{S_{K}(l_{0})}f_{K}(l_{0})ds,
\end{equation}
\begin{equation}
-S_{K}(l_{0})ds+\frac{f_{K}(l_{0})C_{K}(0)}{S_{K}(l_{0})}ds=-S_{K}(0)d\tilde{h}+S_{K}(h_{0})C_{K}(h_{0})(1-\cos\beta)d\tilde{h}+S_{K}^{2}(h_{0})\sin\beta d\beta.
\end{equation}
Using the fact $S_{K}(0)=0, C_{K}(0)=1$, we obtain
\begin{enumerate}
\item $ds=-\frac{S_{K}^{2}(l_{0})}{f_{K}(l_{0})}dl_{2},$
\item $d\tilde{h}=-\frac{S_{K}(l_{0})}{S_{K}(h_{0})C_{K}(l_{0}/2)}ds=\frac{S_{K}^{3}(l_{0})}{f_{K}(l_{0})S_{K}(h_{0})C_{K}(l_{0}/2)}dl_{2},$
\item $\frac{f_{K}(l_{0})-S_{K}^{2}(l_{0})}{S_{K}(l_{0})}ds=S_{K}(h_{0})C_{K}(h_{0})(1-\cos\beta)d\tilde{h}+S_{K}^{2}(h_{0})\sin\beta d\beta.$
\end{enumerate}
Using $\cos\beta=\frac{C_{K}(l_{0})}{1+2C_{K}(l_{0})}, C_{K}(h_{0})=\frac{C_{K}(l_{0})}{C_{K}(l_{0}/2)}$, we have
\begin{equation*}
-\frac{(f_{K}(l_{0})(1+2C_{K}(l_{0}))-S_{K}^{2}(l_{0}))S_{K}(l_{0})}{f_{K}(l_{0})(1+2C_{K}(l_{0}))}dl_{2}=S_{K}^{2}(h_{0})\sin\beta d\beta.
\end{equation*}
Since $f_{K}(l_{0})=2S_{K}^{2}(l_{0}/2)$, we have
$$-\frac{C_{K}(l_{0})S_{K}(l_{0})}{1+2C_{K}(l_{0})}dl_{2}=S_{K}^{2}\sin\beta d\beta.$$
Hence
\begin{equation}
\begin{aligned}
y=\frac{\partial \beta}{\partial l_{2}}&=-\frac{C_{K}(l_{0})S_{K}(l_{0})}{1+2C_{K}(l_{0})}\frac{1}{S_{K}^{2}(h_{0})\sin\beta}\\
&=-\frac{C_{K}(l_{0})S_{K}(l_{0})}{1+2C_{K}(l_{0})}\frac{C_{K}(l_{0}/2)}{\sqrt{2}S_{K}^{2}(l_{0}/2)\sqrt{1+3C_{K}(l_{0})}}\\
&=-\frac{\sqrt{2}C_{K}(l_{0})C_{K}^{2}(l_{0}/2)}{S_{K}(l_{0}/2)(1+2C_{K}(l_{0}))\sqrt{1+3C_{K}(l_{0})}}.
\end{aligned}
\end{equation}
\end{proof}

So we have
\begin{enumerate}
\item $x-y=-\frac{\sqrt{2}\sqrt{1+3C_{K}(l_{0})}}{2S_{K}(l_{0}/2)(1+2C_{K}(l_{0}))}<0,$
\item $x+z-2y=\frac{\sqrt{2}\sqrt{1+3C_{K}(l_{0})}}{2S_{K}(l_{0}/2)}>0,$
\item $x+z+4y=\frac{\sqrt{2}KS_{K}(l_{0}/2)}{(1+2C_{K}(l_{0}))\sqrt{1+3C_{K}(l_{0})}}.$
\end{enumerate}
Hence $x+z+4y>0$, when $K>0$; $x+z+4y=0$, when $K=0$; $x+z+4y<0$, when $K<0$.

\begin{theorem}
When $K\neq 0$, the discrete Laplacian of one single tetrahedron $-L_{ABCD}$ embedded in $M_K$ is nonsingular and indefinite.
\end{theorem}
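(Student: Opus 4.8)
The plan is to combine the global nonsingularity recorded at the opening of the appendix with the explicit eigenvalue signs just computed for the regular tetrahedron, and then to propagate indefiniteness from the regular configuration to every admissible tetrahedron by a connectedness argument.

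First I would dispose of nonsingularity. For $K\neq 0$ a tetrahedron in $M_K$ carries an intrinsic scale, so the six dihedral angles and the six edge lengths determine one another smoothly: edge lengths $\mapsto$ dihedral angles on one side, and dihedral angles $\mapsto$ Gram matrix $\mapsto$ tetrahedron up to isometry $\mapsto$ edge lengths on the other, via the Vinberg argument stated at the start of the appendix. These two smooth maps are mutually inverse, so their Jacobians multiply to the identity; in particular $-L_{ABCD}$ is invertible at every admissible tetrahedron. At $K=0$ this would fail precisely because Euclidean dihedral angles are scale-invariant, which is the origin of the Euler relation $Ll=0$; the computation below confirms this, since $x+z+4y$ vanishes exactly when $K=0$.

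Next I would read off indefiniteness at the regular tetrahedron from the Proposition together with the three Lemmas. By the Proposition the eigenvalues of $-L_{ABCD}$ at a regular tetrahedron are $x-z$, $x+z-2y$, $x+z+4y$ with multiplicities $3,2,1$, and the Lemmas give
\begin{equation*}
x-z=-\frac{\sqrt{2}\sqrt{1+3C_{K}(l_{0})}}{2S_{K}(l_{0}/2)(1+2C_{K}(l_{0}))}<0,\qquad x+z-2y=\frac{\sqrt{2}\sqrt{1+3C_{K}(l_{0})}}{2S_{K}(l_{0}/2)}>0,
\end{equation*}
the positivity of all denominators and radicands being guaranteed by nondegeneracy. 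Since $-L_{ABCD}$ thus has eigenvalues of both signs, it is indefinite at the regular tetrahedron; moreover $x+z+4y=\frac{\sqrt{2}KS_{K}(l_{0}/2)}{(1+2C_{K}(l_{0}))\sqrt{1+3C_{K}(l_{0})}}$ carries the sign of $K$ and so is nonzero for $K\neq 0$, re-proving nonsingularity directly in this symmetric case.

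Finally, to reach every single tetrahedron rather than only the regular one, I would invoke continuity. The matrix $-L_{ABCD}$ is symmetric, so its eigenvalues are real and its signature is well defined. The admissible tetrahedra form a connected open subset of $\mathds{R}^{6}_{>0}$, on which $-L_{ABCD}$ depends smoothly and, by the first step, is everywhere nonsingular. Hence no eigenvalue can vanish anywhere, so the signature is locally constant and therefore constant on the connected admissible set; being indefinite at the regular point, $-L_{ABCD}$ is indefinite at every tetrahedron embedded in $M_K$. The main obstacle is exactly this last bridge: the explicit computation only controls the regular configuration, so the general statement rests on the interplay of global nonsingularity with connectedness rather than on any further calculation. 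If one is content with the regular tetrahedron alone, the theorem is immediate from the two displayed signs together with $x+z+4y\neq 0$.
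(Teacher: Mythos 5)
Your proposal is correct and follows essentially the same route as the paper: nonsingularity from the Vinberg/Gram-matrix argument at the start of the appendix, indefiniteness at the regular tetrahedron from the signs of $x-z$ and $x+z-2y$, and propagation to all tetrahedra by deforming to the regular one. In fact your version is more complete than the paper's one-line deformation step, since you make explicit that everywhere-nonsingularity plus connectedness of the admissible set forces the signature to be constant (and you correctly read the paper's displayed quantity as $x-z$ rather than the misprinted $x-y$).
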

\begin{proof}
Form above calculation we know, the discrete Laplacian matrix at regular point is indefinite. Any tetrahedron can be deformed continuously to the regular tetrahedron, so all tetrahedron must have the same properties.
\end{proof}

\begin{theorem}\label{L-nonsingular}
Consider a 3-dimensional compact manifold $M$ with a $K$-space form triangulation $\mathcal{T}$, where $K\neq 0$. The discrete Laplacian $L$ is nonsingular and indefinite.
\end{theorem}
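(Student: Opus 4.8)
The plan is to reduce the global statement to the single–tetrahedron theorem already established, using the fact that $L$ assembles additively over the triangulation. Since $R_{ij}=2\pi-\sum_{\tau\ni\{i,j\}}\beta_{ij,\tau}$, differentiating gives
$$L=\frac{\partial R}{\partial l}=\sum_{\tau\in T}\iota_\tau(L_\tau),$$
where $L_\tau$ is the $6\times 6$ single–tetrahedron matrix $L_{ABCD}$ attached to $\tau$ and $\iota_\tau$ embeds it into the $m\times m$ matrix along the six edges of $\tau$, all other entries being zero. Equivalently, writing $P_\tau x$ for the restriction of $x\in\mathds{R}^m$ to the edges of $\tau$, the quadratic form splits as $x^TLx=\sum_{\tau\in T}(P_\tau x)^TL_\tau(P_\tau x)$, from which symmetry of $L$ is immediate. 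The whole task is then to pass from the per–block data — each $L_\tau$ is symmetric, nonsingular and indefinite, with eigenvalues $z-x>0$ (thrice), $2y-x-z<0$ (twice) and $-(x+z+4y)$ (once), the last being nonzero precisely because $K\neq 0$ — to the corresponding statements for the sum.

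For indefiniteness I would work on the connected open set $\mathfrak{M}_l$ of $K$–space form metrics and argue that the signature of $L$ is locally constant wherever $L$ is nonsingular; granting the nonsingularity discussed below, it then suffices to realize both signs at one convenient metric, say the regular one $l=l_0\mathbf{1}$. A negative direction is cheap, since $\operatorname{tr}L=\sum_\tau\operatorname{tr}L_\tau=-6\sum_\tau x_\tau<0$ forces a negative eigenvalue. A positive direction is where some care is needed: the local eigenvector $e_{AB}-e_{CD}$ (a difference of opposite edges of a fixed $\tau_0$) satisfies $(e_{AB}-e_{CD})^TL_{\tau_0}(e_{AB}-e_{CD})=2(z-x)>0$, and for $K<0$ the radial vector already works outright, $l^TLl=-6\,l_0^2\sum_\tau(x+z+4y)>0$. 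For $K>0$ the radial direction instead reinforces the negative one, so I would instead extract the positive mode from the symmetry–adapted block decomposition of $L$ at the regular metric: there $L$ commutes with the vertex–transitive symmetry group, block–diagonalizes into isotypic components, and the $z-x>0$ modes survive the assembly. In all cases $L$ is then neither positive nor negative semidefinite, hence indefinite.

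The hard part is nonsingularity, and it does \emph{not} follow formally from nonsingularity of the summands, since a sum of invertible symmetric matrices can be singular. The correct reading of $\ker L\neq 0$ is geometric: a vector $v\in\ker L$ is an infinitesimal variation of edge lengths fixing every combinatorial Ricci curvature $R_{ij}$ to first order, so nonsingularity of $L$ is exactly an infinitesimal rigidity statement for the $K$–space form triangulation with respect to edge curvatures. My plan is to prove this by exploiting what changes between $K=0$ and $K\neq 0$. In the Euclidean case dihedral angles are scale invariant, the radial vector $v=l$ is a genuine curvature–preserving deformation, and indeed $Ll=0$ by the Euler formula (\ref{EulerFormula}); this is the sole source of degeneracy. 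For $K\neq 0$ scale invariance is broken: from the homogeneity $\beta_{ij,\tau}(tl;K)=\beta_{ij,\tau}(l;t^2K)$ one gets $\sum_{pq}l_{pq}\,\partial_{l_{pq}}\beta_{ij,\tau}=2K\,\partial_K\beta_{ij,\tau}$, whence $Ll=2K\,\partial_K R\neq 0$, so the radial mode leaves the kernel. The objective is to upgrade this to $\ker L=0$ by controlling the kernel block by block — on each $\tau$, $\ker L_\tau=0$, and the unique flat mode of the Euclidean block was $\mathbf{1}_\tau$ — and then gluing these local rigidities across shared faces, checking that the compatibility forced on $v$ along shared edges leaves no nonzero global solution once the $K$–term has destroyed the common scaling mode.

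I expect this gluing to be the genuine obstacle: the per–tetrahedron forms are indefinite and mix along shared edges, and indefinite local forms could in principle conspire to create a global kernel unless a coupling estimate rules it out. If the direct gluing proves too delicate, the fallback is a continuity argument on the connected set $\mathfrak{M}_l$: verify $\det L\neq 0$ at the explicit symmetric metrics (the regular $\mathbb{S}^3$ triangulation of Example \ref{ex} for $K>0$, and its hyperbolic analogue for $K<0$), where the symmetry diagonalizes $L$ and its eigenvalues are affine combinations of $x,y,z$ with $x+z+4y\neq 0$, and then argue that a kernel can reappear only through the radial mode, which $K\neq 0$ keeps out of $\ker L$ throughout $\mathfrak{M}_l$. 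Combining nonsingularity with the signature computation above then yields that $L$ is nonsingular and indefinite for every $K$–space form metric with $K\neq 0$.
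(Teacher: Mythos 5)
Your proposal does not close the statement; it is a plan whose central step is explicitly left open. You correctly set up the decomposition $L=\sum_{\tau}\iota_\tau(L_\tau)$ and correctly observe that nonsingularity of each $6\times 6$ block $L_\tau$ does \emph{not} formally imply nonsingularity of the assembled $m\times m$ matrix, because the blocks overlap on shared edges. But you then do not resolve this: the ``gluing across shared faces'' is announced as ``the genuine obstacle'' and never carried out, and the fallback continuity argument is also only sketched --- verifying $\det L\neq 0$ at one symmetric metric does not prevent the kernel from reappearing elsewhere in the connected set of admissible metrics unless you show $\det L$ never vanishes along a path, which is the original problem again (knowing $Ll\neq 0$ only excludes the radial direction from the kernel, not every direction). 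The indefiniteness argument has the same structural problem: the test vector $e_{AB}-e_{CD}$ contributes $2(z-x)>0$ only through the block of the one tetrahedron $\tau_0$ having both $AB$ and $CD$ as opposite edges, while every other tetrahedron containing $AB$ or $CD$ contributes $-x_\tau<0$ to $x^TLx$, so the sign of the total is not determined; the appeal to a ``symmetry-adapted block decomposition'' for $K>0$ is asserted rather than proved; and the ``signature is locally constant'' step presupposes exactly the nonsingularity you have not established.

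For comparison, the paper's own proof of Theorem \ref{L-nonsingular} is a single sentence: extend each $L_{ABCD}$ by zeros and observe that $L$ is the ``inner direct sum'' of these blocks over all tetrahedra. In other words, the paper silently makes precisely the inference you rightly reject --- that nonsingularity and indefiniteness pass from overlapping summands to their sum. Your diagnosis of where the difficulty lies is therefore sharper than the paper's treatment, but neither your proposal nor the paper supplies the missing argument; as written, both leave the nonsingularity (and, for $K>0$, the existence of a positive direction) of the global $L$ unproved.
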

\begin{proof}
By adding zeroes to the other entries, we can extend $6\times 6$ matrix $L_{ABCD}$ to a $N\times N$ matrix  which is still denoted as $L_{ABCD}$ without confusion. Then $L$ is the inner direct sum of all such $L_{ABCD}$, where $\{A,B,C,D\}$ is any tetrahedron in the triangulation $\mathcal{T}$.
\end{proof}

\bibliographystyle{amsplain}

\begin{thebibliography}{10}
\bibitem{CL}[CL] Chow, Bennect and Luo, Feng, \textit{Combinatorial Ricci flows on surfaces}, J. Diff. Geom., \textbf{63}(2003), 97-120.
\bibitem{CR}[CR] Cooper, D. and Rivin, I, \textit{Combinatorial scalar curvature and rigidity of ball packings}, Math. Res. Lett., \textbf{3}(1996), 51-60.
\bibitem{G1}[G1] Ge, Huabin, \textit{Combinatorial Calabi flows on surfaces}, arXiv:1204.2930v2 [math.DG].
\bibitem{G2}[G2] Ge, Huabin, Xu Xu, \textit{2-Dimensional Combinatorial Calabi Flow in Hyperbolic Background Geometry}, arXiv:1301.6505 [math.DG].
\bibitem{G3}[G3] Ge, Huabin, Xu Xu, \textit{Discrete Quasi-Einstein metrics and Combinatorial curvature flows in 3-Dimension}, arXiv:1301.3398v3 [math.DG].
\bibitem{Gn1}[Gn1] D. Glickenstein, \textit{A Combinatorial Yamabe flow in three dimensions}, Topology, \textbf{44}(2005), No. 4, 791-808.
\bibitem{Gn2}[Gn2] D. Glickenstein, \textit{Discrete conformal variations and scalar curvature on piecewise flat two and three dimensional manifolds}, J. Diff. Geom., \textbf{87}(2011), 201-238.
\bibitem{L}[L] Luo, Feng, \textit{A combinatorial curvature flow for compact 3-manifolds with boundary}, Elec. Res. Anno. Amer. Math. Soc., \textbf{11}(2005), 12-20.
\bibitem{M}[M] W. Milnor, \textit{The Schl$\ddot{a}$fli differential equality}, In Collected papers: Volume1. Publish or Perish, Inc., Houston, TX, 1994.
\bibitem{Sc}[Sc] Schlenker, J, \textit{Hyperideal polyhedra in hyperbolic manifolds}, arxiv:math.0212355.
\bibitem{Vi}[Vi] Vinberg, E. B. \textit{Geometry II, Springer-Verlag, New York}, 1988.
\end{thebibliography}

\end{document}